\newtheorem{theorem}{Theorem}[section]
\newtheorem{proposition}[theorem]{Proposition}
\newtheorem{corollary}[theorem]{Corollary}
\theoremstyle{definition}
\newtheorem{definition}[theorem]{Definition}
\newtheorem{example}[theorem]{Example}
\theoremstyle{remark}
\numberwithin{equation}{section}
\begin{document}

\title{Inductive limits of compact quantum metric spaces}

\author[Botao Long]{Botao Long$^{*,1}$}
\address{$^1$ School of Mathematics, MIIT Key Laboratory of Mathematical Modelling
and High Performance Computing of Air Vehicles, Nanjing University of Aeronautics and Astronautics, Nanjing 211106, Jiangsu, China}
\email{longbt289@nuaa.edu.cn; longbt289@163.com}

\author[Ghadir Sadeghi]{Ghadir Sadeghi $^{2}$}

\address{$^2$ Department of Mathematics and Computer Sciences, Hakim Sabzevari University, P. O. Box 397, Sabzevar, Iran}
\email{g.sadeghi@hsu.ac.ir; ghadir54@gmail.com}

\renewcommand{\subjclassname}{\textup{2020} Mathematics Subject Classification}

\thanks{$^*$Corresponding author: Long Botao}

\subjclass[]{Primary 46L85; Secondary 46L87, 58B34.}

\keywords{Lip-norm; compact quantum metric space; inductive limit; Lipschitz isomorphic.}

\begin{abstract}
A compact quantum metric space is a unital $C^*$-algebra equipped with a Lip-norm.
Let $\{(A_n, L_n)\}$ be a sequence of compact quantum metric spaces, and let $\phi_n:A_n\to A_{n+1}$ be a unital $^*$-homomorphism preserving Lipschitz elements for $n\geq 1$.
We show that there exists a compact quantum metric space structure on the inductive limit $\varinjlim(A_n,\phi_n)$ by means of the inverse limit of the state spaces $\{\mathcal{S}(A_n)\}$. 
We also give some sufficient conditions that two inductive limits of compact quantum metric spaces are Lipschitz isomorphic.
\end{abstract}

\maketitle



\section{Introduction}

In 1989, Connes initiated the study of noncommutative metric spaces in terms of a spectral triple, by which he formulated the metric data in noncommutative geometry \cite{Connes1989,Connes1994}.
He gave precisely the geodesic distance on a compact, spin and Riemannian manifold $M$ back by a spectral triple $(C(M),L^2(M,S),D)$, which consists of the unital commutative $C^*$-algebra $C(M)$ of complex-valued continuous functions on $M$ represented faithfully by pointwise multiplication operators on the Hilbert space $L^2(M,S)$ of $L^2$ spinors on $M$, and of a Dirac operator $D$ on $L^2(M,S)$.
 More specifically, the geodesic distance $\rho(p,q)$ between any two points $p,q$ of $M$ is computed via the Dirac operator $D$ through the following formula:
\begin{equation*}\label{geodesic1}
  \rho(p,q)=\sup\{|f(p)-f(q)|:f\in C(M),\|[D,f]\|\leq 1\}.
\end{equation*}
For the compact metric space $M$, we can think of the points $p,q$ in $M$ to be characters (or pure states) $\hat{p},\hat{q}$ of $C(M)$, and thus the above formula can be regarded as follows:
\begin{equation*}\label{geodesic2}
  \rho(p,q)=\sup\{|\hat{p}(f)-\hat{q}(f)|:f\in C(M),\|[D,f]\|\leq 1\}.
\end{equation*}
Inspired by this key observation, Connes extended the concept of metric to the noncommutative setting
by way of introducing an ordinary metric $\rho_{L_D}$ on the state space $\mathcal{S}(A)$ of a unital noncommutative $C^*$-algebra $A$ from a spectral triple $(A,\mathcal{H},D)$ by a similar formula
\begin{equation}\label{spectral metric}
  \rho_{L_D}(\mu,\nu)=\sup\{|\mu(a)-\nu(a)|:a\in A,L_D(a)=\|[D,a]\|\leq 1\},
\end{equation}
for $\mu,\nu\in\mathcal{S}(A)$.
This metric is a generalization of the Monge-Kantorovich metric on the set of all regular probability measures on a compact Hausdorff space $X$, which is identified with the state space of $C(X)$ by Riesz's representation theorem \cite{R2,R5}.
 As a consequence, a spectral triple is  appropriate for a noncommutative analog of a metric on a compact space.

 In \cite{CI2}, Christensen and Ivan obtained two classes of spectral triples with respect to $C(X)$ of an ordinary compact metric space $(X,\rho)$, applying countable direct sums of two-dimensional modules.
 The first class of spectral triple is finitely summable for any positive real number, and recovers the original metric $\rho$ by the formula \eqref{spectral metric} precisely.
 The second class of spectral triple is parameterized by a real number $\delta>0$, and does not give the original metric $\rho$ back exactly. Its induced metric $\rho_\delta$ on $X$ is only within a $\delta$-distance of $\rho$, that is,
\[
\rho(p,q)\leq \rho_\delta(p,q)\leq (1+\delta)\rho(p,q)
\]
for all $p,q\in X$.
In addition, the second class of spectral triple reflects some aspects of the topological dimensions of the compact metric space $(X,\rho)$, and gives some computable estimates of the upper Minkowski dimension of the metric space $(X,\rho)$.

In 1998, stimulated by what happens for ordinary compact metric spaces, Rieffel initiated the discussion of the agreement between the underlying weak $^*$-topology on the state space $\mathcal{S}(A)$ and the metric topology determined by the metric $\rho_{L_D}$ arising from the above formula \eqref{spectral metric}.
The metric data for a unital noncommutative $C^*$-algebra $A$ was presented through the seminorm $L_D(a):=\|[D,a]\|$, which acts as the usual Lipschitz seminorms for ordinary compact metric spaces \cite{R5}.
Christensen and Ivan, and Hawkins and Zacharias produced this kind of examples on $C^*$-algebraic extensions of unital $C^*$-algebras by stable ideals under a certain Toeplitz type property \cite{CI3,HZ}.
Recently, it turns out that   several classes of crossed product $C^*$-algebras satisfy the agreement between the metric topology and the weak $^*$-topology \cite{Haw,JP,KK,Long5,Long7}. Many other exciting examples of this situation have been constructed as well \cite{CAEC,CR,CI2,Connes1989,Li2005,Long1,Long3,OR,R1,R3}.

In general, if there is a $^*$-seminorm $L$ on a dense $^*$-subalgebra of a unital $C^*$-algebra $A$ with the identity element $\mathbf{1}_A$ such that $L(\mathbf{1}_A)=0$, we then obtain a metric $\rho_L$ on the state space $\mathcal{S}(A)$ of $A$, much as Connes did, by
\[\rho_L(\mu,\nu)=\sup\{|\mu(a)-\nu(a)|:a\in A, L(a)\leq1\},\quad \mu,\nu\in\mathcal{S}(A).\]
(Without further hypotheses $\rho_L$ may take the value $+\infty$.)
When the induced metric topology on $\mathcal{S}(A)$ arising from $\rho_L$ coincides with the underlying weak $^*$-topology, Rieffel defined the pair $(A,L)$ to be a compact quantum metric space \cite{R4,R5}.
By introducing a notion of Gromov-Hausdorff distance for compact quantum metric spaces, Rieffel can give a precise meaning to the statement that a sequence of matrix algebras converge (in this quantum distance) to the $2$-sphere \cite{R4, R6}, which appears in the literature of theoretical high-energy physics and string theory (see \cite{R4, R6} and references therein).
See \cite{KD2,La2015,La2016b,Li2003,Li2006,R7,Wu2005,Wu2006p,Wu2006f} for further discussion.

In \cite{R1}, Rieffel showed that if there is an ergodic action $\alpha$ of a compact group $G$ with the identity element $e$ and a continuous length function $\ell$ on a unital $C^*$-algebra $A$, then the seminorm
\begin{equation}\label{ergodic}
  L(a)=\sup\left\{ \frac{\|\alpha_g(a)-a\|}{\ell(g)}:g\neq e\right \},\quad a\in A
\end{equation}
endows $A$ with a compact quantum metric space structure.
Let $M_p$ be the $p\times p$ matrix algebra over $\mathbb{C}$. There is a unique ergodic action of $\mathbb{Z}_p\times\mathbb{Z}_p$ on $M_p$ up to conjugacy,
which can induce an ergodic action of $(\mathbb{Z}_p\times\mathbb{Z}_p)^{\mathbb{Z}}$ on the UHF algebra $M_{p^{\infty}}$ \cite{KD1}.
 Moreover, Kerr introduced a continuous length function on $(\mathbb{Z}_p\times\mathbb{Z}_p)^{\mathbb{Z}}$, and computed the metric dimension of $M_{p^{\infty}}$ with respect to the seminorm arising from this length function and the formula \eqref{ergodic} \cite{KD1}.
Furthermore, it is shown that if $A$ is a UHF algebra with an $^*$-automorphism $\alpha$ fixing $a$ UHF-filtration, then $A\rtimes_\alpha \mathbb{Z}$ is a compact quantum metric space \cite{HZ}.

The typical example of compact quantum metric space is noncommutative $N$-tori \cite{Long1,Long2,R1,R3}.
Let $A_\theta$ be the noncommutative $2$-torus with generators $U$ and $V$ satisfying the relation $VU=e^{2\pi i\theta}UV$ for some irrational number $\theta$. There is a $^*$-automorphism $\beta$ of $A_\theta$ by $\beta(U)=U$ and $\beta(V)=V^{-1}$. Bratteli and Kishimoto verif\/ied that the f\/ixed point algebra $B_\theta$ of $\beta$ is actually an AF algebra \cite{BK}. Rieffel pointed out in \cite{R5} that $B_\theta$ is also a compact quantum metric space. For a unital AF algebra $A$ with a faithful state, there is a natural filtration, as an increasing sequence of finite dimensional $C^*$-algebras, on $A$, by which Christensen and Ivan constructed a ($p$-summable) spectral triple, and hence induced a compact quantum metric space structure on $A$ \cite{CI1}. In \cite{AL1}, Aguilar and Latr\'{e}moli\`{e}re constructed compact quantum metric space structures on unital AF algebras with a faithful tracial state, and proved that for such metrics, AF algebras are the limits of their defining inductive sequences of finite dimensional $C^*$-algebras for the quantum propinquity.

In \cite{ALR}, Aguilar, Latr\'{e}moli\`{e}re and Rainone showed that Bunce-Deddens algebras, as compact quantum metric spaces, are also limits of circle algebras for Rieffel's quantum Gromov-Hausdorff distance, and form a continuous family indexed by the Baire space.
Given a unital inductive limit of $C^*$-algebras for which each $C^*$-algebra of the inductive sequence is endowed with a compact quantum metric space structure, Aguilar produced sufficient conditions to build a compact quantum metric on the inductive limit from the quantum metrics on the inductive sequence by utilizing the completeness of the dual Gromov-Hausdorff propinquity \cite{A1}. Therefore, it is a natural question whether one may use some more relaxed conditions to give a compact quantum metric space structure on the unital inductive limit with more general building blocks such that more interested $C^*$-algebraic classes in operator algebras can be equipped with a compact quantum metric space structure.

In the present paper, we propose a solution to the above question.
The contents of the sections of this paper are as follows.
   In Section 2, we provide some basic concepts of the theory of compact quantum metric spaces.
   In Section 3, we introduce a notion of inductive sequence of compact quantum metric spaces with connecting unital $^*$-homomorphisms preserving Lipschitz elements, and endow the resulting inductive limit with a compact quantum metric space structure by means of the inverse limit of the state spaces of building blocks.
   In Section 4, we give some sufficient conditions that two inductive limits of compact quantum metric spaces are Lipschitz isomorphic by way of the inductive limits and their building blocks.


\section{Preliminaries}

In this section, we provide some preliminaries and background for the theory of compact quantum metric spaces. We start with ordinary compact metric space since it is a motivation for the definition of a compact quantum metric space.

To this end, let $(X,d)$ be a compact metric space, and let $C(X)$ be the $C^*$-algebra of all complex-valued  continuous functions on $X$. For any $f\in C(X)$, we define the Lipschitz constant of $f$ as
\begin{eqnarray*}
L_d(f)=\sup\left\{\frac{|f(x)-f(y)|}{d(x,y)}: x,y\in X, x\neq y\right\},
\end{eqnarray*}
where the value $+\infty$ is permitted. Note that the metric $d$  can be exactly recovered from $L_d$ by the formula
\[
d(x,y)=\sup\left\{|f(x)-f(y)|:f\in C(X), L_d(f)\leq1\right\},
\]
for all $x,y\in X$. One constructs a metric, which is called the Monge-Kantorovich metric, on the state space $\mathcal{S}(C(X))$ of $C(X)$, i.e., the set of all probability measures on $X$, by
\[
\rho_{L_d}(\mu,\nu)=\sup\left\{|\mu(f)-\nu(f)|:f\in C(X), L_d(f)\leq1\right\},
\]
for all $\mu,\nu\in\mathcal{S}(C(X))$.
The metric $\rho_{L_d}$ extends $d$ from the set of all Dirac measures on $X$ to the set of all probability measures.
Kantorovich showed that the topology on the state space $\mathcal{S}(C(X))$ induced by the metric $\rho_{L_d}$ coincides with the underlying weak$^*$-topology \cite{R1,R5}.

Let $ A$ be a unital $C^*$-algebra. The identity element of $ A$ is denoted by $\mathbf{1}_{ A}$.
The state space of $ A$ is represented by $\mathcal{S}( A)$, and the self-adjoint part of $ A$  is denoted
by $ A_{sa}$.

 A \textit{Lipschitz seminorm} on a unital $C^*$-algebra $A$ is a seminorm $L$ on $A$ that is permitted to take the value $+\infty$, and satisfies
  \begin{enumerate}
    \item $L(a^*)=L(a)$ for all $a\in A$.
    \item $L(a)=0$ if and only if $a\in\mathbb{C}\mathbf{1}_A$.
    \item The set $\mathrm{dom}(L)=\{a\in A:L(a)<+\infty\}$ of Lipschitz elements in $A$ is a dense subspace of $A$.
  \end{enumerate}
If the set $\{a\in A:L(a)\leq r\}$ is closed in $A$ for some and hence all $r>0$, we say that $L$ is \textit{lower semicontinuous}. Equivalently, for any sequence $\{a_n\}$ in $A$ which converges in norm to $a\in A$, we have $L(a)\leq\liminf_{n\to\infty}L(a_n)$.

  A \textit{Lip-norm} on a unital $C^*$-algebra $A$ is a Lipschitz seminorm $L$ such that the topology, induced by the Monge-Kantorovich metric
  \[
 \rho_L(\mu,\nu)=\sup\{|\mu(a)-\nu(a)|: a\in A, L(a)\leq1\},\quad \mu, \nu\in\mathcal{S}(A)
 \]
 on the state space $\mathcal{S}(A)$ of $A$, coincides with the weak*-topology.

\begin{definition}[\cite{R2,R4,R5,KD1,Long1}]
  If there exists a Lip-norm $L$ on a unital $C^*$-algebra $A$, we say that the pair $(A,L)$ is a \textit{compact quantum metric space}.
\end{definition}


\section{Metric structures of inductive limits}\label{sec2}

In this section, we will furnish the inductive limit of compact quantum metric spaces with a Lip-norm by means of the inverse limit of state spaces of building blocks.

Let $\{A_n\}$ be a sequence of unital $C^*$-algebras.  If for each $n\in\mathbb{N}$ there exists a unital $^*$-homomorphism $\phi_n :A_n\to A_{n+1}$, then $\{(A_n,\phi_n)\}$ is an inductive sequence of unital $C^*$-algebras, i.e.,
\[
A_1\xrightarrow{~\phi_1~}A_2\xrightarrow{~\phi_2~} A_3\xrightarrow{~\phi_3~} \cdots A_n\xrightarrow{~\phi_n~}A_{n+1}\xrightarrow{~\phi_{n+1}~}\cdots.
\]
It is well known that one can obtain a unital $C^*$-algebra $\varinjlim (A_n,\phi_n)$, the inductive limit (or direct limit) of the sequence $\{(A_n, \phi_n)\}$ \cite{Lin,M}. Furthermore, for each $n\in\mathbb{N}$ there is a unital $^*$-homomorphism $\phi^n:A_n\to \varinjlim (A_n,\phi_n)$ such that the diagram
\[
\xymatrix{
  A_n \ar@{->}[dr]_{\phi^n} \ar@{->}[r]^{\phi_n}
                & A_{n+1} \ar@{->}[d]^{\phi^{n+1}}  \\
                & \varinjlim (A_n,\phi_n)
               }
\]
commutes.

Note that if $ A$ is a unital $C^*$-algebra and $\{A_n\}$ is an increasing sequence of unital $C^*$-subalgebras of $ A$ whose union is dense in $ A$ and $\phi_n : A_n\to  A_{n+1}$ is the inclusion map, then $\{(A_n, \phi_n)\}$ is an inductive sequence of unital $C^*$-algebras and $ A$ is the inductive limit of $\{(A_n, \phi_n)\}$ \cite{Lin,M}, i.e.,
\[
A=\varinjlim (A_n,\phi_n)=\overline{\bigcup_{n=1}^{\infty}A_n}.
\]

The following example is a motivation for discussing the compact quantum metric space structures of inductive limits.

\begin{example}\label{motiv}
For any $n\in\mathbb{N}$, if $ A_n=C([0,1-\frac{1}{n+1}])$ is the $C^*$-algebra of all complex-valued continuous functions on $[0,1-\frac{1}{n+1}]$, and $L_n$ is the Lipchitz seminorm on $ A_n$ defined by
\[
L_n(f)=\sup\left\{\frac{|f(x)-f(y)|}{|x-y|}: x,y\in \left[0,1-\frac{1}{n+1}\right],x\neq y\right\},
\]
for $f\in A_n$, where the value $+\infty$ is permitted, we consider the Monge-Kantorovich metric $\rho_{L_n}$ on the state space $\mathcal{S}( A_n)$ of $A_n$ given by the formula
\[
\rho_{L_n}(\mu,\nu)=\sup\{|\mu(f)-\nu(f)|: f\in A_n,L_n(f)\leq 1\},
\]
for all $\mu,\nu\in \mathcal{S}(A_n)$. It is easy to see that $( A_n, L_n)$ is a compact quantum metric space.

Define a unital $^*$-homomorphism $\phi_n : A_n\to  A_{n+1}$ as follows:
\[
\phi_n(f)(t)=\begin{cases}
\begin{array}{ll}
f(t), &{\rm if}~~ t\in [0,1-\frac{1}{n+1}], \\
f(1-\frac{1}{n+1}),  & {\rm if}~~ t\in(1-\frac{1}{n+1},1-\frac{1}{n+2}].
\end{array}
\end{cases}
\]
Thus $\{(A_n, \phi_n)\}$ is an inductive sequence of unital $C^*$-algebras and
 $$
 (L_{n+1}\circ\phi_n)(f)=L_{n+1}(\phi_n(f))=L_n(f),
 $$
 for all $f\in A_n$.
Moreover, one can consider $A_n$ as a unital $C^*$-subalgebra of $C([0,1])$ by means of the following formula:
\[
\phi^n(f)(x)=\begin{cases}
\begin{array}{ll}
f(x), &{\rm if}~~ 0\leq x\leq 1-\frac{1}{n+1}, \\
f(1-\frac{1}{n+1}),  & {\rm if}~~ 1-\frac{1}{n+1}< x\leq 1,
\end{array}
\end{cases}
\]
for any $f\in A_n$, since $\phi^n$ is an isometric unital $^*$-homomorphism from $A_n$ to $C([0,1])$ for all $n\in\mathbb{N}$. In addition, the diagram
\[
\xymatrix{
  A_n \ar@{->}[dr]_{\phi^n} \ar@{->}[r]^{\phi_n}
                & A_{n+1} \ar@{->}[d]^{\phi^{n+1}}  \\
                & C([0,1])
               }
\]
is commutative for all $n\in\mathbb{N}$. It follows that
\[
C([0,1])= \varinjlim(A_n,\phi_n)=\overline{\bigcup_{n=1}^{\infty}C\left(\left[0,1-\frac{1}{n+1}\right]\right)}^{\|\cdot\|_{\infty}}.
\]

Now, we consider the Lipschitz seminorm $L$ on $C([0,1])$ given by
 \begin{equation}\label{lipconstant}
  L(f)=\sup\left\{\frac{|f(x)-f(y)|}{|x-y|}: x,y\in [0,1],x\neq y\right\},
 \end{equation}
 for all $f\in C([0,1])$.
Then it is obvious that
$$
 L_{n+1}(\phi_n(f))=L_n(f)=L(\phi^n(f)),
 $$
 for all $f\in A_n$.
 It is easy to confirm that the pair $(C([0,1]), L)$ is a compact quantum metric space by the Arzel\`{a}-Ascoli theorem \cite{Long3,OR,R1}. Furthermore, it follows from \cite[Theorem 2]{T} that the state space of $C([0,1])$ is affinely homeomorphic to the projective limit (or inverse limit) of the state spaces $\{\mathcal{S}( A_n)\}$.
\end{example}

\begin{definition}\label{inductivesequence}
Let $\{(A_n, L_n)\}$ be a sequence of compact quantum metric spaces.  If for each $n\in\mathbb{N}$ there exists a unital $^*$-homomorphism $\phi_n :A_n\to A_{n+1}$ such that $\phi_n(\mathrm{dom}(L_n))\subset\mathrm{dom}(L_{n+1})$, then we call $\{((A_n, L_n),\phi_n)\}$ an \textit{inductive sequence of compact quantum metric spaces}.
More precisely, we can represent it as
\[
(A_1, L_1)\xrightarrow{~\phi_1~}(A_2, L_2)\xrightarrow{~\phi_2~} (A_3, L_3)\xrightarrow{~\phi_3~}(A_4, L_4)\xrightarrow{~\phi_4~}\cdots.
\]
\end{definition}

Let $\{((A_n,L_n), \phi_n)\}$ be an inductive sequence of compact quantum metric spaces. Then $\{(A_n, \phi_n)\}$ is an inductive sequence of unital $C^*$-algebras, and hence get the inductive limit $\varinjlim (A_n,\phi_n)$.
Inspired by Example \ref{motiv}, we will endow $\varinjlim (A_n,\phi_n)$ with some compact quantum metric space structure in Theorem \ref{inductive}.

Let $\{((A_n, L_n),\phi_n)\}$ be an inductive sequence of compact quantum metric spaces. Then for any $n\in\mathbb{N}$, $(\mathcal{S}(A_n), \rho_{L_n})$ is a compact metric space  and from the unital $^*$-homomorphism  $\phi_n:A_n\to A_{n+1}$ we can get an affine continuous map $\hat{\phi}_n:\mathcal{S}(A_{n+1})\to \mathcal{S}(A_n)$ defined by
\[
\hat{\phi}_n(\mu)(a)=\mu(\phi_n(a)),\quad a\in A_n,
\]
for all $\mu\in\mathcal{S}(A_{n+1})$. As a result, we have the following inverse sequence $\{(\mathcal{S}(A_n),\hat{\phi}_n)\}$ of compact metric spaces:
\[
\mathcal{S}(A_1)\xleftarrow{~\hat{\phi}_1~}\mathcal{S}(A_2)\xleftarrow{~\hat{\phi}_2~} \mathcal{S}(A_3)\xleftarrow{~\hat{\phi}_3~} \mathcal{S}(A_4)\xleftarrow{~\hat{\phi}_4~}\cdots.
\]
Furthermore, we then obtain a Hausdorff space $\varprojlim(\mathcal{S}(A_n),\hat{\phi}_n)$, the inverse limit space of the sequence $\{(\mathcal{S}(A_n),\hat{\phi}_n)\}$,  which is given by
\[
\left\{(\mu_1,\mu_2,\mu_3,\ldots)\in \prod_{n=1}^\infty\mathcal{S}( A_n):\mu_n=\hat{\phi}_{n}(\mu_{n+1}),n\in\mathbb{N}\right\}.
\]
Since $\mathcal{S}(A_n)$ is a compact Hausdorff space under the weak $^*$-topology for all $n\in\mathbb{N}$, $\prod_{n=1}^\infty\mathcal{S}( A_n)$ is also compact and Hausdorff by Tychonoff's theorem, and hence $\varprojlim(\mathcal{S}(A_n),\hat{\phi}_n)$, as a closed subspace of $\prod_{n=1}^\infty\mathcal{S}( A_n)$, is a compact Hausdorff space.

In the sequel, we will denote $\varinjlim (A_n,\phi_n)$ and  $\varprojlim(\mathcal{S}(A_n),\hat{\phi}_n)$ by $A$ and $\mathcal{S}$, respectively, for the sake of simplicity of notations.

\begin{proposition}\label{affine}
  Let $\{((A_n, L_n),\phi_n)\}$ be an inductive sequence of compact quantum metric spaces. Then the state space $\mathcal{S}(A)$ of the inductive limit $A$ is affinely homeomorphic to the inverse limit $\mathcal{S}$ of the sequence $\{(\mathcal{S}(A_n),\hat{\phi}_n)\}$.
\end{proposition}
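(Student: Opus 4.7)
The plan is to exhibit an explicit affine bijection $\Psi:\mathcal{S}(A)\to\mathcal{S}$ and then promote it to a homeomorphism via compactness. Define
\[
\Psi(\mu)=(\mu\circ\phi^n)_{n\in\mathbb{N}}.
\]
The relation $\phi^{n+1}\circ\phi_n=\phi^n$ yields $\mu\circ\phi^n=(\mu\circ\phi^{n+1})\circ\phi_n=\hat{\phi}_n(\mu\circ\phi^{n+1})$, so the tuple indeed lies in $\mathcal{S}$, and $\Psi$ is plainly affine. Injectivity is clear: if $\Psi(\mu)=\Psi(\nu)$, then $\mu$ and $\nu$ agree on each $\phi^n(A_n)$, hence on the norm-dense $*$-subalgebra $\bigcup_n\phi^n(A_n)\subset A$, hence on $A$.

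The heart of the proof is surjectivity. Given $(\mu_n)\in\mathcal{S}$, I would prescribe a linear functional $\mu_0$ on $\bigcup_n\phi^n(A_n)$ by $\mu_0(\phi^n(a))=\mu_n(a)$. The two things to check are that this does not depend on $n$ or on the chosen preimage $a$, and that $\mu_0$ is bounded. Both follow from one identity: iterating $\mu_n=\hat{\phi}_n(\mu_{n+1})$ gives $\mu_n(a)=\mu_{n+k}\bigl(\phi_{n+k-1}\circ\cdots\circ\phi_n(a)\bigr)$ for every $k$, and in the $C^*$-algebraic direct limit one has $\|\phi^n(a)\|=\lim_k\|\phi_{n+k-1}\circ\cdots\circ\phi_n(a)\|$. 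Combining these yields $|\mu_n(a)|\leq\|\phi^n(a)\|$; in particular $\mu_n$ vanishes on $\ker\phi^n$, so $\mu_0$ is well defined, and the cross-level comparison $\phi^n(a)=\phi^m(\phi_{m-1}\circ\cdots\circ\phi_n(a))$ for $m>n$ confirms independence of $n$. By continuity $\mu_0$ extends to a linear functional $\mu$ on $A$ with $\|\mu\|\leq 1$. Positivity is verified on the dense subalgebra: any positive element of $\phi^n(A_n)$ can be written as $\phi^n(b^*b)$ by taking a square root inside that $C^*$-subalgebra, and $\mu_n(b^*b)\geq 0$. Since the $\phi^n$ are unital, $\mu(\mathbf{1}_A)=\mu_n(\mathbf{1}_{A_n})=1$, so $\mu\in\mathcal{S}(A)$ with $\Psi(\mu)=(\mu_n)$.

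For the topology, each coordinate map $\mu\mapsto\mu\circ\phi^n$ is weak-$*$ continuous (evaluation on any fixed $a\in A_n$ is so), and $\mathcal{S}\subset\prod_n\mathcal{S}(A_n)$ carries the product of weak-$*$ topologies; hence $\Psi$ is continuous. Because $\mathcal{S}(A)$ is weak-$*$ compact by Banach--Alaoglu and $\mathcal{S}$ is Hausdorff, the continuous bijection $\Psi$ is automatically a homeomorphism, which is affine by construction. The main obstacle I anticipate is the bookkeeping in the surjectivity step, namely isolating the bound $|\mu_n(a)|\leq\|\phi^n(a)\|$ so that one simultaneously gets well-definedness of $\mu_0$ across levels, vanishing on $\ker\phi^n$, and the norm bound required for the extension to $A$; every other step is formal, and the argument in effect extends the special case used in Example \ref{motiv} (referenced there via \cite{T}) to an arbitrary inductive sequence of compact quantum metric spaces.
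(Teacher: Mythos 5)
Your proof is correct and follows essentially the same route as the paper: the same map $\mu\mapsto(\mu\circ\phi^n)_n$, the same compatibility and injectivity checks, and the same compactness argument (continuous bijection from a compact space onto a Hausdorff space) to upgrade to a homeomorphism. The only difference is that where the paper obtains surjectivity by invoking the universal property of inductive limits (Theorem 1.10.14 in \cite{Lin}), you prove that special case by hand via the bound $|\mu_n(a)|\leq\|\phi^n(a)\|$, which is a correct and self-contained substitute.
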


\begin{proof}
  For any $\mu\in \mathcal{S}(A)$, set
  \[
  \mu_n:=\mu\circ \phi^n,
  \]
  for all $n\in\mathbb{N}$. Then one can easily check that $(\mu_1,\mu_2,\mu_3,\ldots)$ is in $\mathcal{S}$ since
  \[
  \hat{\phi}_n(\mu_{n+1})=\mu_{n+1}\circ\phi_n=\mu\circ \phi^{n+1}\circ\phi_n=\mu\circ\phi^n=\mu_n,
  \]
  for all $n\in\mathbb{N}$. Thus we get a map
  \[
  \Phi: \mathcal{S}(A)\to \mathcal{S}
  \]
  by mapping $\mu\in\mathcal{S}(A)$ to $(\mu_1,\mu_2,\mu_3,\ldots)\in \mathcal{S}$. It is obvious that the map $\Phi$ is affine, continuous and injective.

  Now we just need to show that the map $\Phi$ is surjective. For any $(\mu_1,\mu_2,\mu_3,\ldots)\in\mathcal{S}$, we have
  \[
  (\mu_{n+1}\circ\phi_n)(a)=\hat{\phi}_{n}(\mu_{n+1})(a)=\mu_n(a),
  \]
for all $n\in\mathbb{N}$ and $a\in A_n$, and hence the following commutative diagram
$$\CD
  A_1 @>\phi_1>> A_2 @>\phi_2>> A_3 @>\phi_3>>\cdots A\\
  @V V \mu_1V @V  V\mu_2V @V  V\mu_3V  \\
  \mathbb{C} @=\mathbb{C} @=\mathbb{C} @= \cdots \mathbb{C}.
\endCD
$$
By Theorem 1.10.14 in \cite{Lin}, we obtain a state $\mu$ on $A$ with
\[
\mu(\phi^n(a))=\mu_n(a)
\]
for all $n\in \mathbb{N}$ and $a\in A_n$. For this $\mu$ we have $\Phi(\mu)=(\mu_1,\mu_2,\mu_3,\ldots)$, and so $\Phi$ is surjective. Therefore, $\Phi$ is an affine homeomorphism from the state space $\mathcal{S}(A)$ of $A$ onto the inverse limit $\mathcal{S}$ of $\{(\mathcal{S}(A_n),\hat{\phi}_n)\}$.
\end{proof}

Let $\{((A_n, L_n),\phi_n)\}$ be an inductive sequence of compact quantum metric spaces. Then for any $n\in\mathbb{N}$, $(\mathcal{S}(A_n), \rho_{L_n})$ is a compact metric space and its induced metric topology on the state space $\mathcal{S}(A_n)$ coincides with the weak $^*$-topology. Thus we can endow the inverse limit $\mathcal{S}$ with a product metric as
\[
\rho_0((\mu_1,\mu_2,\mu_3,\ldots),(\nu_1,\nu_2,\nu_3,\ldots))=\sum_{n=1}^\infty\frac{1}{2^{n}}\frac{\rho_{L_n}(\mu_n,\nu_n)}{1+\rho_{L_n}(\mu_n,\nu_n)},
\]
for all $(\mu_1,\mu_2,\mu_3,\ldots),(\nu_1,\nu_2,\nu_3,\ldots)\in\mathcal{S}$. It follows that for this metric the inverse limit $\mathcal{S}$ is a compact metric space and its induced metric topology on $\mathcal{S}$ coincides with the product topology, as a subspace of $\prod_{n=1}^\infty\mathcal{S}( A_n)$. By Proposition \ref{affine} we can equip the state space $\mathcal{S}(A)$ of the inductive limit $A$ with the metric $\rho$ as follows:
\begin{equation*}\label{product}
  \rho(\mu,\nu):=\rho_0(\Phi(\mu),\Phi(\nu))=\sum_{n=1}^\infty\frac{1}{2^{n}}\frac{\rho_{L_n}(\mu_n,\nu_n)}{1+\rho_{L_n}(\mu_n,\nu_n)},
\end{equation*}
for all $\mu,\nu\in\mathcal{S}(A)$, where $\Phi$ is the affine homeomorphism from $\mathcal{S}(A)$ onto $\mathcal{S}$. Therefore, we have the following proposition.

\begin{proposition}\label{weak}
 Let $\{((A_n, L_n),\phi_n)\}$ be an inductive sequence of compact quantum metric spaces. Then $\left(\mathcal{S}(A),\rho\right)$ is a compact metric space, and the metric topology on the state space $\mathcal{S}(A)$, induced by $\rho$, agrees with the weak $^*$-topology.
\end{proposition}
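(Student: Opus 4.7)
The plan is to reduce everything to the classical metrization theorem for countable products of metric spaces and then apply Proposition \ref{affine}.

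First, I would verify that
\[
\rho_0\bigl((\mu_n),(\nu_n)\bigr)=\sum_{n=1}^{\infty}\frac{1}{2^n}\frac{\rho_{L_n}(\mu_n,\nu_n)}{1+\rho_{L_n}(\mu_n,\nu_n)}
\]
is a bona fide metric on $\prod_{n=1}^{\infty}\mathcal{S}(A_n)$ whose induced topology is precisely the product topology. This is the standard Fr\'echet-type construction: each summand is bounded by $2^{-n}$ so the series converges absolutely and uniformly; the map $t\mapsto t/(1+t)$ is an increasing concave homeomorphism from $[0,\infty)$ onto $[0,1)$ that preserves the triangle inequality, which yields both the metric axioms and the fact that $\rho_{L_n}$-convergence in the $n$th coordinate is equivalent to convergence of the $n$th summand. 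A routine $\varepsilon$-$N$ argument (truncate the tail using $\sum_{n>N}2^{-n}<\varepsilon/2$, then control the first $N$ coordinates factorwise) then shows that a net in the product is $\rho_0$-convergent if and only if it is coordinate-wise convergent.

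Second, because each $L_n$ is a Lip-norm, $\rho_{L_n}$ metrizes the weak-$\ast$ topology on $\mathcal{S}(A_n)$. Consequently the topology induced by $\rho_0$ on $\prod_{n=1}^{\infty}\mathcal{S}(A_n)$ coincides with the product of the weak-$\ast$ topologies, which is compact and Hausdorff by Tychonoff's theorem. Since $\mathcal{S}$ is a closed subspace of this product (as already observed in the paragraph preceding the proposition), the restriction of $\rho_0$ turns $\mathcal{S}$ into a compact metric space whose metric topology is exactly the inverse-limit (i.e.\ subspace) topology.

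Finally, I would invoke Proposition \ref{affine}: the map $\Phi:\mathcal{S}(A)\to \mathcal{S}$ is an affine homeomorphism when $\mathcal{S}(A)$ is equipped with the weak-$\ast$ topology and $\mathcal{S}$ with the inverse-limit topology identified in the previous step. Transporting $\rho_0$ through $\Phi$ yields exactly the metric $\rho$ on $\mathcal{S}(A)$ defined in the text, so the metric topology of $\rho$ agrees with the weak-$\ast$ topology on $\mathcal{S}(A)$. Compactness of $(\mathcal{S}(A),\rho)$ is then automatic, since the state space of the unital $C^\ast$-algebra $A$ is weak-$\ast$ compact by the Banach--Alaoglu theorem. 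The only step with any real content is the first paragraph, and this is classical; the remainder is a direct application of Proposition \ref{affine} together with Tychonoff's theorem, so I expect no significant obstacle.
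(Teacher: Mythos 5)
Your proposal is correct and follows essentially the same route as the paper: the paper's proof simply cites Proposition \ref{affine} together with the discussion preceding the proposition, which is exactly the observation that $\rho_0$ is a Fr\'echet-type product metric inducing the product (hence weak-$*$) topology on the compact set $\mathcal{S}$, transported to $\mathcal{S}(A)$ via the affine homeomorphism $\Phi$. You merely spell out the standard details that the paper leaves implicit.
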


\begin{proof}
  The conclusion follows from Proposition \ref{affine} and the discussion before the proposition.
\end{proof}

Note that for the seminorm $L$ on $C([0,1])$ given by the equation \eqref{lipconstant}, we have

\begin{equation}\label{lipconstant1}
  L(f)=\sup\left\{\frac{|\mu(f)-\nu(f)|}{\rho_L(\mu,\nu)}: \mu,\nu\in\mathcal{S}(C([0,1])),\mu\neq \nu\right\},
\end{equation}
  for all $f\in C([0,1])$. Indeed, it is obvious that
 \[
 L(f)\leq \sup\left\{\frac{|\mu(f)-\nu(f)|}{\rho_L(\mu,\nu)}: \mu,\nu\in\mathcal{S}(C([0,1])),\mu\neq \nu\right\},
 \]
 for all $f\in C([0,1])$. It follows from the definition of $\rho_L$ that for any $f\in C([0,1])$, we have
 \[
 |\mu(f)-\nu(f)|\leq L(f)\rho_L(\mu,\nu),
 \]
 for all $\mu,\nu\in\mathcal{S}(C([0,1]))$. Thus
 \[
  \sup\left\{\frac{|\mu(f)-\nu(f)|}{\rho_L(\mu,\nu)}: \mu,\nu\in\mathcal{S}(C([0,1])),\mu\neq \nu\right\}\leq L(f),
 \]
 as desired.

 Now we are ready to define a seminorm on the inductive limit of an inductive sequence of compact quantum metric spaces.

Let $\{((A_n, L_n),\phi_n)\}$ be an inductive sequence of compact quantum metric spaces. Now, as the equation \eqref{lipconstant1}
,  we can define a seminorm $L$ on the inductive limit $A$ by the formula
\begin{equation}\label{seminorm}
  L(a)=\sup\left\{\frac{|\mu(a)-\nu(a)|}{\rho(\mu,\nu)}:\mu,\nu\in\mathcal{S}(A),\mu\neq\nu\right\},
\end{equation}
for all $a\in A$.

In the sequel, we will always consider the seminorm $L$, as in equation \eqref{seminorm}, for the inductive limit of an inductive sequence of compact quantum metric spaces if there is no other specific instruction.

\begin{proposition}\label{lipschitz}
Let $\{((A_n, L_n),\phi_n)\}$ be an inductive sequence of compact quantum metric spaces. Then $L$ is a lower semicontinuous Lipschitz seminorm on the inductive limit $A$.
\end{proposition}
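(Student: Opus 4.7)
The plan is to verify in turn the three defining clauses of a Lipschitz seminorm, and then lower semicontinuity. The algebraic properties are the easy part. Subadditivity and absolute homogeneity pass from $|\cdot|$ to $L$ by standard triangle-inequality manipulations, and the identity $\mu(a^*)=\overline{\mu(a)}$ that every state satisfies gives $L(a^*)=L(a)$ for free. For the null-space condition, if $L(a)=0$ then $\mu(a)$ is independent of the state $\mu$, say equal to $c$, so $a-c\mathbf{1}_A$ is annihilated by every state of $A$ and therefore vanishes, placing $a$ in $\mathbb{C}\mathbf{1}_A$; the converse is immediate.

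The substantive step is density of $\mathrm{dom}(L)$. My approach is to exhibit a dense subset of $A$ lying entirely in $\mathrm{dom}(L)$, namely $\bigcup_n \phi^n(\mathrm{dom}(L_n))$. That this set is dense in $A$ is routine: $\bigcup_n \phi^n(A_n)$ is dense in the inductive limit $A$, each $\mathrm{dom}(L_n)$ is dense in $A_n$ since $L_n$ is a Lip-norm, and each $\phi^n$ is contractive. What needs verification is the inclusion $\phi^n(\mathrm{dom}(L_n)) \subseteq \mathrm{dom}(L)$, i.e.\ a finiteness bound on $L(\phi^n(a))$ for $a \in \mathrm{dom}(L_n)$. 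For any distinct $\mu,\nu \in \mathcal{S}(A)$, identified with sequences $(\mu_k),(\nu_k)$ under the homeomorphism $\Phi$ of Proposition \ref{affine}, the numerator of the defining quotient simplifies to $|\mu_n(a)-\nu_n(a)| \leq L_n(a)\,\rho_{L_n}(\mu_n,\nu_n)$, while retaining only the $n$-th term of the series defining $\rho$ yields
\[
\rho(\mu,\nu) \;\geq\; \frac{1}{2^n}\cdot\frac{\rho_{L_n}(\mu_n,\nu_n)}{1+\rho_{L_n}(\mu_n,\nu_n)}.
\]
Since the compact metric space $(\mathcal{S}(A_n),\rho_{L_n})$ has finite diameter $D_n$, forming the ratio yields the uniform estimate $L(\phi^n(a)) \leq 2^n(1+D_n)\,L_n(a) < \infty$.

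Lower semicontinuity is then essentially automatic: for each fixed pair $\mu\neq\nu$ the map $a \mapsto |\mu(a)-\nu(a)|/\rho(\mu,\nu)$ is a continuous seminorm on $A$, so $L$, as the pointwise supremum of this family, is lower semicontinuous. The main obstacle in the whole argument is the finiteness estimate $L(\phi^n(a))<\infty$; the essential trick there is to truncate the product metric $\rho$ to a single coordinate and then exploit the finite diameter that every compact quantum metric space automatically carries. Everything else reduces to standard facts about states on unital $C^*$-algebras and suprema of continuous functions.
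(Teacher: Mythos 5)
Your proposal is correct and follows essentially the same route as the paper: the core step in both is the finiteness bound $L(\phi^n(a))\leq 2^n\bigl(1+\mathrm{diam}(\mathcal{S}(A_n),\rho_{L_n})\bigr)L_n(a)$, obtained by truncating the series defining $\rho$ to its $n$-th term and invoking the finite diameter of $(\mathcal{S}(A_n),\rho_{L_n})$, which places $\bigcup_n\phi^n(\mathrm{dom}(L_n))$ inside $\mathrm{dom}(L)$. The remaining points (separation by states for the kernel condition, lower semicontinuity as a supremum of continuous functions) match the paper's treatment, which it leaves as routine.
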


\begin{proof}
It is easy to check that  $L(a^*)=L(a)$ for all $a\in A$.
Since the state space $\mathcal{S}(A)$ separates the elements of $A$, we conclude that $L(a)=0$ if and only if $a\in\mathbb{C}\mathbf{1}_A$.
The lower semicontinuity of $L$ is straightforward.

 For any $n\in\mathbb{N}$ and  $a\in A_n$ with $L_n(a)<\infty$, we have that $L(\phi^n(a))<\infty$.
 Indeed, for any $\mu,\nu\in\mathcal{S}(A)$ with $\mu\neq \nu$, we have
 \[
 |\mu(\phi^n(a))-\nu(\phi^n(a))|=|\mu_n(a)-\nu_n(a)|\leq L_n(a)\rho_{L_n}(\mu_n,\nu_n)
 \]
and
 \[
\rho(\mu,\nu)=\sum_{n=1}^\infty\frac{1}{2^{n}}\frac{\rho_{L_n}(\mu_n,\nu_n)}{1+\rho_{L_n}(\mu_n,\nu_n)}
\geq \frac{1}{2^{n}}\frac{\rho_{L_n}(\mu_n,\nu_n)}{1+\rho_{L_n}(\mu_n,\nu_n)},
\]
hence
\begin{align*}
  \frac{|\mu(\phi^n(a))-\nu(\phi^n(a))|}{\rho(\mu,\nu)} & \leq L_n(a)\rho_{L_n}(\mu_n,\nu_n)\times 2^n\frac{1+\rho_{L_n}(\mu_n,\nu_n)}{\rho_{L_n}(\mu_n,\nu_n)} \\
   &= 2^nL_n(a)(1+\rho_{L_n}(\mu_n,\nu_n))\\
   &\leq 2^nL_n(a)(1+\mathrm{diam}(\mathcal{S}(A_n),\rho_{L_n})),
\end{align*}
where
\[
\mathrm{diam}(\mathcal{S}(A_n),\rho_{L_n})=\sup\{\rho_{L_n}(\mu',\nu'):\mu',\nu'\in\mathcal{S}(A_n)\}
\]
is the diameter of $(\mathcal{S}(A_n),\rho_{L_n})$. It follows that
\begin{align*}
  L(\phi^n(a)) &=\sup\left\{\frac{|\mu(\phi^n(a))-\nu(\phi^n(a))|}{\rho(\mu,\nu)}:\mu,\nu\in\mathcal{S}(A),\mu\neq\nu\right\}  \\
   & \leq 2^nL_n(a)(1+\mathrm{diam}(\mathcal{S}(A_n),\rho_{L_n}))<\infty.
\end{align*}
Thus
$$
\bigcup_{n=1}^\infty\phi^n(\mathrm{dom}(L_n))\subset \mathrm{dom}(L).
$$
This implies that $\mathrm{dom}(L)$ is a dense subspace of $A$, and completes the proof.
\end{proof}

\begin{theorem}\label{inductive}
 Let $\{((A_n, L_n),\phi_n)\}$ be an inductive sequence of compact quantum metric spaces. Then $L$ is a Lip-norm on the inductive limit $A$, i.e., $(A,L)$ is a compact quantum metric space.
\end{theorem}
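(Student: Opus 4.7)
My plan is to exploit the metric $\rho$ constructed just before Proposition~\ref{weak}: by Proposition~\ref{weak} we already know $\rho$ metrizes the weak$^*$-topology on $\mathcal{S}(A)$, and by the very definition \eqref{seminorm} of $L$ we have the fundamental inequality
\[
|\mu(a)-\nu(a)|\leq L(a)\,\rho(\mu,\nu) \qquad (a\in A,\ \mu,\nu\in\mathcal{S}(A)).
\]
The strategy is to deduce that the Monge–Kantorovich metric $\rho_L$ on $\mathcal{S}(A)$ is dominated by $\rho$, observe that $\rho_L$ is nevertheless a genuine metric, and then invoke the standard fact that a Hausdorff topology coarser than a compact Hausdorff one must agree with it.

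First I would verify $\rho_L(\mu,\nu)\le \rho(\mu,\nu)$ for all $\mu,\nu\in\mathcal{S}(A)$: taking the supremum of the above inequality over $a$ with $L(a)\le 1$ is immediate. Consequently, the topology on $\mathcal{S}(A)$ induced by $\rho_L$ is coarser than (or equal to) the topology induced by $\rho$, which is the weak$^*$-topology by Proposition~\ref{weak}.

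Next I would show that $\rho_L$ separates points of $\mathcal{S}(A)$. By Proposition~\ref{lipschitz}, $\mathrm{dom}(L)$ is a norm-dense subspace of $A$; if $\mu\neq \nu$ in $\mathcal{S}(A)$, then by density and continuity of states there exists $a\in\mathrm{dom}(L)$ with $\mu(a)\neq \nu(a)$. Since $L(a)=0$ would force $a\in\mathbb{C}\mathbf{1}_A$ and hence $\mu(a)=\nu(a)$, we have $L(a)>0$, and then $a/L(a)$ lies in the unit ball of $L$, giving
\[
\rho_L(\mu,\nu)\geq \frac{|\mu(a)-\nu(a)|}{L(a)}>0.
\]
Therefore $\rho_L$ is a genuine metric, so the $\rho_L$-topology is Hausdorff.

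Finally, I would close the argument with the following well-known topological fact: if $\tau_1\subseteq\tau_2$ are two topologies on a set, $\tau_2$ is compact, and $\tau_1$ is Hausdorff, then $\tau_1=\tau_2$ (the identity map $(\mathcal{S}(A),\tau_2)\to(\mathcal{S}(A),\tau_1)$ is a continuous bijection from a compact space to a Hausdorff space, hence a homeomorphism). Applied with $\tau_1$ the $\rho_L$-topology and $\tau_2$ the weak$^*$-topology (which is compact Hausdorff and, by the previous paragraph, finer than $\tau_1$), this forces the $\rho_L$-topology to coincide with the weak$^*$-topology. Combined with Proposition~\ref{lipschitz}, this shows $L$ satisfies every condition in the definition of a Lip-norm, completing the proof. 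I anticipate no substantial obstacle: the construction in Section~3 was engineered so that $\rho$ sits between $\rho_L$ and the weak$^*$-topology and does all the work, and the only subtlety is ensuring $\rho_L$ separates states, which is handled by the density assertion of Proposition~\ref{lipschitz}.
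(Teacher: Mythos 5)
Your proof is correct, but it takes a genuinely different route from the paper's. The paper invokes the total-boundedness criterion for Lip-norms (Proposition 1.3 of \cite{OR}, or Proposition 6.8 of \cite{Long3}): it shows that $\mathcal{B}=\{a\in A: L(a)\le 1,\ \mu(a)=0\}$ is norm totally bounded, by checking that $\hat{\mathcal{B}}$ is equicontinuous and uniformly bounded in $C(\mathcal{S}(A))$ (hence totally bounded by Arzel\`a--Ascoli) and then pulling total boundedness back to $A$ through the Kadison function representation and the estimate $\|a\|\le 2\|\hat a\|_\infty$. You instead argue directly on the state space: the inequality $\rho_L\le\rho$ shows the $\rho_L$-topology is coarser than the weak$^*$-topology, density of $\mathrm{dom}(L)$ from Proposition \ref{lipschitz} shows $\rho_L$ separates states and is therefore a genuine (finite, since $\rho_L\le\rho\le 1$) metric, and the compact-to-Hausdorff continuous-bijection argument forces the two topologies to coincide. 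Your route is shorter and avoids Arzel\`a--Ascoli, the Kadison representation, and the external criterion entirely; it exploits the fact that $L$ is by construction the $\rho$-Lipschitz constant of $\hat a$, so that $\rho$ automatically dominates $\rho_L$. What the paper's argument buys in exchange is the total boundedness of the unit Lip-ball itself, which is the standard quantitative form of the Lip-norm condition and is what one typically needs for Gromov--Hausdorff-type estimates downstream; for the statement of Theorem \ref{inductive} alone, both proofs are complete. The only points worth making fully explicit in your write-up are that the separation step uses property (2) of a Lipschitz seminorm ($L(a)=0$ if and only if $a\in\mathbb{C}\mathbf{1}_A$), established in Proposition \ref{lipschitz}, and that the passage from $\rho_L\le\rho$ to the inclusion of topologies uses the triangle inequality for $\rho_L$ in the usual way; both are already present in your sketch.
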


\begin{proof}
From Proposition \ref{lipschitz}, we see that $L$ is a Lipschitz seminorm on $A$. Let $\mu\in\mathcal{S}(A)$.  By Proposition 1.3 in \cite{OR} or Proposition 6.8 in \cite{Long3}, we just need to show that the set
\[
\mathcal{B}=\{a\in A:L(a)\leq 1,\mu(a)=0\}
\]
 is a norm totally bounded subset of $A$.

By Proposition \ref{weak}, $(\mathcal{S}(A),\rho)$ is a compact metric space and the $\rho$-topology on $\mathcal{S}(A)$ agrees with the weak $^*$-topology. For any $a\in \mathcal{B}$ and $\nu_1,\nu_2\in\mathcal{S}(A)$, we have
 \[
 |\hat{a}(\nu_1)-\hat{a}(\nu_2)|=|\nu_1(a)-\nu_2(a)|\leq L(a)\rho(\nu_1,\nu_2)\leq \rho(\nu_1,\nu_2),
 \]
 and hence $\hat{\mathcal{B}}$ is a family of equicontinuous continuous functions on $(\mathcal{S}(A),\rho)$.

 For any $a\in \mathcal{B}$, we have
 \[
 |\hat{a}(\nu)|=|\nu(a)-\mu(a)|\leq L(a)\rho(\mu,\nu)\leq \mathrm{diam}(\mathcal{S}(A),\rho)
 \]
 for all $\nu\in\mathcal{S}(A)$, and so
 \[
 \|\hat{a}\|_{\infty}\leq \mathrm{diam}(\mathcal{S}(A),\rho).
 \]
 It follows that $\hat{\mathcal{B}}$ is a bounded subset of the unital $C^*$-algebra $C(\mathcal{S}(A))$ of complex-valued continuous functions on $\mathcal{S}(A)$.
 By Arzel\`{a}-Ascoli theorem, $\hat{\mathcal{B}}$ is a totally bounded subset of $C(\mathcal{S}(A))$.

From Kadison representation theorem, the canonical map
$$a\in A_{sa}\mapsto \hat{a}\in \mathrm{Aff}(\mathcal{S}(A))\subset C(\mathcal{S}(A)),$$
where $\mathrm{Aff}(\mathcal{S}(A))$ is the set of all real-valued affine continuous functions on $\mathcal{S}(A)$, is a unital order isomorphism, and hence an isometry. For any $a\in A$, we have
\begin{eqnarray*}
\|\hat{a}\|_\infty\leq\|a\| &\leq& \|a_1\|+\|a_2\| =\|\widehat{a_1}\|_\infty+\|\widehat{a_2}\|_\infty  \\
     &\leq& \|\hat{a}\|_\infty+\|\hat{a}\|_\infty=2\|\hat{a}\|_\infty,
\end{eqnarray*}
where $a_1=\frac{a+a^*}{2}, a_2=\frac{a-a^*}{2i}\in A_{sa}$. For any $\varepsilon>0$, since $\hat{\mathcal{B}}$ is totally  bounded, there exist $a_1, a_2,\ldots,a_m\in \mathcal{B}$ such that for any $\hat{a}\in\hat{\mathcal{B}}$, there is an $a_i$ such that $\|\hat{a}-\hat{a}_i\|_\infty<\varepsilon/2$. It follows that for any $c\in\mathcal{B}$, there is an $a_i$ such that $\|\hat{c}-\hat{a}_i\|_\infty<\frac{\varepsilon}{2}$, and thus $\|c-a_i\|\leq 2\|\hat{c}-\hat{a}_i\|_\infty<\varepsilon$. This implies that $\mathcal{B}$ is a totally bounded subset of $A$ in the norm topology, and completes the proof of the theorem.
\end{proof}

In \cite{A1}, Aguilar investigated compact quantum metrics on the unital inductive limit $A$ arising from an increasing sequence $\{A_n\}$ of untial $C^*$-subalgebras of $A$. Moreover, if $\{(A_n,L_n)\}$ is a sequence of $(C,D)$-quasi-Leibniz compact quantum metric space, and for each $n\in \{0\}\cup\mathbb{N}$:
\begin{enumerate}
\item $\mathrm{dom}(L_n)=\{a\in A_n:L_{n}(a)<\infty\}$ is a dense $^*$-subalgebra of $A_n$;
  \item $L_{n+1}(a)\leq L_n(a)$ for all $a\in A_n$;
  \item there exists a positive real number sequence $\{\beta_n\}_{n=0}^\infty$ such that  $\sum_{n=0}^{\infty}\beta_n<\infty$ and the length of the bridge $\gamma_{n,n+1}=(A_{n+1},1_A,\iota_{n,n+1},\mathrm{id}_{n+1})$ satisfies
      \[
      \lambda(\gamma_{n,n+1}|L_n,L_{n+1})\leq \beta_n,
      \]
\end{enumerate}
then he gave a compact quantum metric on the inductive limit $A$ by utilizing the completeness of the dual Gromov-Hausdorff propinquity (see Definitions 1.2, 1.6, 1.7, 2.5, Lemma 1.8 and Theorem 2.15 in \cite{A1}). In particular, this gives a compact quantum metric on unital AF algebras (see Theorem 3.4 in \cite{A1}).
In \cite{ALR}, Aguilar, Latr\'{e}moli\`{e}re and Rainone took a similar assumption and approach to show the existence of compact quantum metrics on the Bunce-Deddens algebras by means of quantum Gromov-Hausdorff distance (see Theorem 6.15 in \cite{ALR}).

In Definition \ref{inductivesequence} and Theorem \ref{inductive}, we use a different approach and some more relaxed assumptions to obtain a compact quantum metric space structure on the unital inductive limit  by means of the inverse limit of state spaces of building blocks.

The following important examples of compact quantum metric spaces are immediate.

\begin{corollary}[\cite{A1,ALR}]\label{cor3.7}
 UHF algebras, unital AF-algebras and Bunce-Deddens algebras are compact quantum metric spaces.
\end{corollary}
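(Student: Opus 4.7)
The plan is to exhibit each of the three classes as an inductive limit of compact quantum metric spaces in the sense of Definition \ref{inductivesequence}, and then invoke Theorem \ref{inductive} directly. For UHF and unital AF algebras, write $A = \overline{\bigcup_n A_n}$ as the norm closure of an increasing union of unital finite-dimensional $C^*$-subalgebras, with $\phi_n: A_n \hookrightarrow A_{n+1}$ the inclusion maps. On each finite-dimensional $A_n$, choose any Lipschitz seminorm $L_n$ with full domain $\mathrm{dom}(L_n) = A_n$; one may take, for example, the Christensen-Ivan type seminorm coming from a faithful tracial state, or more elementarily $L_n(a) = \|a - \tau_n(a)\mathbf{1}_{A_n}\|$ for a faithful state $\tau_n$ on $A_n$. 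Because $A_n$ is finite-dimensional, the set $\{a \in A_n : L_n(a) \le 1, \tau_n(a) = 0\}$ is automatically norm-compact, so $L_n$ is in fact a Lip-norm and $(A_n, L_n)$ is a compact quantum metric space.

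Since $\mathrm{dom}(L_n) = A_n$, the inclusion $\phi_n$ trivially satisfies $\phi_n(\mathrm{dom}(L_n)) \subset A_{n+1} = \mathrm{dom}(L_{n+1})$, so $\{((A_n, L_n), \phi_n)\}$ is an inductive sequence of compact quantum metric spaces. Theorem \ref{inductive} then endows $A$ with a Lip-norm, giving the AF and UHF cases.

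For a Bunce-Deddens algebra $B$ associated to a supernatural number $\{n_k\}$, use the standard realization $B = \varinjlim(A_n, \phi_n)$ with $A_n = C(\mathbb{T}) \otimes M_{k_n}$ and $\phi_n$ the canonical unital embedding built from a winding map $z \mapsto z^{k_{n+1}/k_n}$ and a matrix amplification (cf.\ \cite{ALR}). Equip each circle algebra $A_n$ with Rieffel's ergodic Lip-norm from \eqref{ergodic}, coming from the rotation action of $\mathbb{T}$ and the arc-length length function; explicitly,
\[
L_n(f) = \sup\left\{\frac{\|f(s) - f(t)\|}{d_{\mathbb{T}}(s,t)} : s,t \in \mathbb{T},\ s \neq t\right\},
\]
which is a well-known Lip-norm making $(A_n, L_n)$ a compact quantum metric space \cite{R1}. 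A direct computation using the form of $\phi_n$ shows that if $L_n(f) < \infty$, then $L_{n+1}(\phi_n(f)) \le (k_{n+1}/k_n)\, L_n(f) < \infty$, since the winding map $z \mapsto z^{k_{n+1}/k_n}$ is $(k_{n+1}/k_n)$-Lipschitz on $(\mathbb{T}, d_{\mathbb{T}})$ and matrix amplification is an isometric $^*$-homomorphism. Hence $\phi_n(\mathrm{dom}(L_n)) \subset \mathrm{dom}(L_{n+1})$, and Theorem \ref{inductive} applies.

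The main technical point is the Bunce-Deddens step, and even there the only thing to verify is the Lipschitz-preservation estimate under the winding map; the AF (and hence UHF) case is essentially tautological once one observes that any full-domain Lipschitz seminorm on a finite-dimensional $C^*$-algebra is automatically a Lip-norm. In all three cases the Lip-norm produced by Theorem \ref{inductive} is the one coming from the inverse-limit metric on the state space of the limit algebra, which is intrinsic to the approach of Section 3 and requires none of the Leibniz or bridge-length hypotheses used in \cite{A1, ALR}.
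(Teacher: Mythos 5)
Your overall strategy --- realize each algebra as an inductive limit satisfying Definition \ref{inductivesequence} and invoke Theorem \ref{inductive} --- is exactly what the paper intends (the paper gives no explicit proof; the corollary is presented as immediate from Theorem \ref{inductive} together with the building-block metrics of \cite{A1,ALR}). Your treatment of the UHF and AF cases is correct and even a little more self-contained than the paper's: the observation that a Lipschitz seminorm with full domain on a finite-dimensional $C^*$-algebra is automatically a Lip-norm (boundedness of $\{a: L_n(a)\le 1,\ \tau_n(a)=0\}$ plus finite-dimensionality gives total boundedness, so the criterion of \cite{OR} applies) is sound, and domain preservation under the inclusions is indeed tautological.

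The Bunce--Deddens step, however, has a genuine gap. The seminorm you put on $A_n=C(\mathbb{T})\otimes M_{k_n}$, namely $L_n(f)=\sup\{\|f(s)-f(t)\|/d_{\mathbb{T}}(s,t)\}$, vanishes on every \emph{constant} $M_{k_n}$-valued function, so its null space is $\mathbb{C}\mathbf{1}\otimes M_{k_n}$, which is strictly larger than $\mathbb{C}\mathbf{1}_{A_n}$ once $k_n>1$. This violates condition (2) in the definition of a Lipschitz seminorm, and correspondingly $\rho_{L_n}$ takes the value $+\infty$ on pairs of states that differ on the matrix fibre (take $a$ with $L_n(a)=0$, $\mu(a)\neq\nu(a)$, and scale), so it cannot metrize the weak$^*$-topology. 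The appeal to Rieffel's ergodic construction \eqref{ergodic} does not rescue this: the rotation action of $\mathbb{T}$ on the first tensor factor is not ergodic on $C(\mathbb{T})\otimes M_{k_n}$ for $k_n>1$ (its fixed-point algebra is $M_{k_n}$), so \cite{R1} does not apply. Thus $(A_n,L_n)$ is not a compact quantum metric space and Theorem \ref{inductive} cannot be invoked as you have set things up. The fix is to replace $L_n$ by a genuine Lip-norm on the circle algebra whose domain still contains the Lipschitz functions --- for instance the maximum of your horizontal Lipschitz constant with a full-domain Lip-norm applied to the fibrewise average $\int_{\mathbb{T}} f$, or simply the quantum metrics constructed on circle algebras in \cite{ALR} --- after which your winding-map estimate (only finiteness, not the precise constant, is needed for Definition \ref{inductivesequence}) and the application of Theorem \ref{inductive} go through.
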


\begin{example}
Bratteli and Kishimoto showed that the ``non commutative sphere'' $B_{\theta}$ is actually an $AF$ C$^*$-algebra \cite{BK}, when $\theta$ is irrational, and hence by Corollary \ref{cor3.7} every non commutative sphere is a compact quantum metric space.
\end{example}


\section{Lipschitz isomorphisms of inductive limits}

In this section, we will investigate the relations of compact quantum metric space structures between two  inductive limits by way of their building blocks, and give some sufficient conditions that two inductive limits of compact quantum metric spaces are Lipschitz isomorphic.

\begin{proposition}\label{universal}
 Let $\{((A_n, L_n),\phi_n)\}$ be an inductive sequence of compact quantum metric spaces.  Suppose that $(B,L_B)$ is a compact quantum metric space with lower semicontinuous Lipschitz seminorm, and that for each $n\in\mathbb{N}$,  there are a positive real number $\lambda_n>0$ and a unital $^*$-homomorphism $\psi_n:A_n\to B$ such that the  diagram
\[
\xymatrix{
  A_n \ar@{->}[dr]_{\psi_n} \ar@{->}[r]^{\phi_n}
                & A_{n+1} \ar@{->}[d]^{\psi_{n+1}}  \\
                & B
               }
\]
commutes and
\[
L_B(\psi_n(a))\leq \lambda_nL_n(a)
\]
for all $a\in A_n$. If the sequence $\{\lambda_n\}$ is bounded, then there is a unique unital $^*$-homomorphism $\psi:A\to B$ such that for each $n\in\mathbb{N}$, the diagram
\[
\xymatrix{
  A_n \ar@{->}[dr]_{\psi_n} \ar@{->}[r]^{\phi^n}
                & A \ar@{->}[d]^{\psi}  \\
                & B
               }
\]
commutes and
\[
L_B(\psi(a))\leq 2\sup\{\lambda_n:n\in\mathbb{N}\}L(a)
\]
for all $a\in A$.
\end{proposition}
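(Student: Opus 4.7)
The plan is in two stages: first construct $\psi$ from the universal property of the unital $C^*$-inductive limit, and then derive the Lip-norm estimate by pulling back pairs of states from $\mathcal{S}(B)$ to $\mathcal{S}(A)$ and feeding them into the defining formula \eqref{seminorm} for $L$. For the existence, the compatibility $\psi_{n+1}\circ\phi_n=\psi_n$ is exactly the hypothesis of the universal property of $\varinjlim(A_n,\phi_n)$, so a unique unital $^*$-homomorphism $\psi\colon A\to B$ with $\psi\circ\phi^n=\psi_n$ drops out from \cite{Lin,M}; this takes care of the diagram and all the remaining content concerns the seminorm inequality.

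Set $\lambda:=\sup_{n}\lambda_n<\infty$ and first work with self-adjoint $a\in A$. Given any $\mu',\nu'\in\mathcal{S}(B)$, the pulled-back states $\tilde\mu:=\mu'\circ\psi$ and $\tilde\nu:=\nu'\circ\psi$ lie in $\mathcal{S}(A)$, and under the identification of Proposition~\ref{affine} their $n$-th components are $\mu'\circ\psi_n$ and $\nu'\circ\psi_n$, via $\psi\circ\phi^n=\psi_n$. The hypothesis $L_B(\psi_n(b))\le\lambda_n L_n(b)$ together with the defining property of $\rho_{L_B}$ gives, for every $b\in A_n$ with $L_n(b)\le 1$, the inequality $|\mu'(\psi_n(b))-\nu'(\psi_n(b))|\le\lambda_n\rho_{L_B}(\mu',\nu')$, so that $\rho_{L_n}((\tilde\mu)_n,(\tilde\nu)_n)\le\lambda_n\rho_{L_B}(\mu',\nu')$. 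Inserting this into the series defining $\rho$ and using $x/(1+x)\le x$ together with $\sum_n 2^{-n}=1$ yields $\rho(\tilde\mu,\tilde\nu)\le\lambda\rho_{L_B}(\mu',\nu')$. The defining inequality $|\tilde\mu(a)-\tilde\nu(a)|\le L(a)\rho(\tilde\mu,\tilde\nu)$ coming from \eqref{seminorm} then produces the key pointwise bound
\[
|\mu'(\psi(a))-\nu'(\psi(a))|\le\lambda L(a)\rho_{L_B}(\mu',\nu'),\qquad \mu',\nu'\in\mathcal{S}(B).
\]

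To convert this pointwise bound into an estimate on $L_B(\psi(a))$ I would invoke Rieffel's recovery formula: for a lower semicontinuous Lip-norm, the value of $L_B$ on a self-adjoint element $b$ agrees with $\sup\{|\mu'(b)-\nu'(b)|/\rho_{L_B}(\mu',\nu'):\mu'\ne\nu'\}$. Applied to the self-adjoint element $\psi(a)$ this gives $L_B(\psi(a))\le\lambda L(a)$. For a general $a\in A$ I would then decompose $a=a_1+ia_2$ with $a_1:=(a+a^*)/2$ and $a_2:=(a-a^*)/(2i)$ self-adjoint; since $L(a^*)=L(a)$ forces $L(a_j)\le L(a)$, subadditivity of $L_B$ yields $L_B(\psi(a))\le 2\lambda L(a)$, producing exactly the factor $2$ in the statement. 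The step I expect to be the main obstacle is precisely the recovery formula: the pullback argument only controls the ``state-space version'' of $L_B$, and identifying it with $L_B$ itself on self-adjoint elements is where the lower semicontinuity hypothesis is used in an essential way. Minor degenerate cases (such as $\tilde\mu=\tilde\nu$ while $\mu'\ne\nu'$, making the ratio trivially zero) are cosmetic and handled by the conventions in \eqref{seminorm}.
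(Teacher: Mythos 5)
Your proposal is correct and follows essentially the same route as the paper's proof: existence of $\psi$ from the universal property, the estimate $\rho_{L_n}(\hat{\psi}_n(\mu'),\hat{\psi}_n(\nu'))\leq\lambda_n\rho_{L_B}(\mu',\nu')$ fed into the series for $\rho$, the pointwise bound on $|\mu'(\psi(a))-\nu'(\psi(a))|$, and then Rieffel's recovery theorem (Theorem 4.1 of \cite{R2}) for the lower semicontinuous $L_B$ on self-adjoint elements, with the factor $2$ coming from the decomposition $a=a_1+ia_2$. You even correctly identified the recovery formula as the one place where lower semicontinuity is essential, which is exactly how the paper uses it.
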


\begin{proof}
From Theorem 6.1.2 in \cite{M}, we see that there exists a unique unital $^*$-homomorphism $\psi:A\to B$ such that for each $n\in\mathbb{N}$, the diagram
\[
\xymatrix{
  A_n \ar@{->}[dr]_{\psi_n} \ar@{->}[r]^{\phi^n}
                & A \ar@{->}[d]^{\psi}  \\
                & B
               }
\]
commutes. Therefore, we just need to show that \[
L_B(\psi(a))\leq 2\sup\{\lambda_n:n\in\mathbb{N}\}L(a),
\]
for all $a\in A$ with $L(a)<\infty$.

For any $n\in\mathbb{N}$ and $\mu,\nu\in\mathcal{S}(B)$, we have
\begin{eqnarray*}
\rho_{L_n}(\hat{\psi}_n(\mu),\hat{\psi}_n(\nu))  &=&\sup\{|\hat{\psi}_n(\mu)(a)-\hat{\psi}_n(\nu)(a)|:a\in A_n,L_n(a)\leq1\}  \\
     &=& \sup\{|\mu(\psi_n(a))-\nu(\psi_n(a))|:a\in A_n,L_n(a)\leq 1\} \\
     &\leq& \lambda_n\sup\{|\mu(b)-\nu(b)|:b\in B,L_B(b)\leq 1\}  \\
     &=& \lambda_n \rho_{L_B}(\mu,\nu).
\end{eqnarray*}
It follows that
\begin{align*}
  \rho(\hat{\psi}(\mu),\hat{\psi}(\nu))&=\sum_{n=1}^\infty\frac{1}{2^{n}}\frac{\rho_{L_n}(\hat{\psi}(\mu)_n,\hat{\psi}(\nu)_n)}{1+\rho_{L_n}(\hat{\psi}(\mu)_n,\hat{\psi}(\nu)_n)}\\
   &=\sum_{n=1}^\infty\frac{1}{2^{n}}\frac{\rho_{L_n}(\hat{\psi}_n(\mu),\hat{\psi}_n(\nu))}{1+\rho_{L_n}(\hat{\psi}_n(\mu),\hat{\psi}_n(\nu))}  \\
   &\leq \sum_{n=1}^\infty\frac{1}{2^{n}}\frac{\lambda_n \rho_{L_B}(\mu,\nu)}{1+\lambda_n \rho_{L_B}(\mu,\nu)} \\
   &\leq \sum_{n=1}^\infty\frac{1}{2^{n}}\frac{\lambda \rho_{L_B}(\mu,\nu)}{1+\lambda \rho_{L_B}(\mu,\nu)}   \\
   &\leq \sum_{n=1}^\infty\frac{\lambda}{2^{n}} \rho_{L_B}(\mu,\nu)\\
   &=\lambda \rho_{L_B}(\mu,\nu),
\end{align*}
for all $\mu,\nu\in\mathcal{S}(B)$, where
\[
\lambda=\sup\{\lambda_n:n\in\mathbb{N}\}<\infty.
\]
As a result, for any $\mu,\nu\in\mathcal{S}(B)$ and $a\in A$ with $L(a)<\infty$, we have
\begin{align*}
  |\mu(\psi(a))-\nu(\psi(a))| &=|\hat{\psi}(\mu)(a)-\hat{\psi}(\nu)(a)|  \\
   &\leq L(a)\rho(\hat{\psi}(\mu),\hat{\psi}(\nu))  \\
   & \leq\lambda  L(a)\rho_{L_B}(\mu,\nu),
\end{align*}
and hence
\begin{align*}
  L_{\rho_{L_B}}(\psi(a)) &:=\sup\left\{\frac{|\mu(\psi(a))-\nu(\psi(a))|}{\rho_{L_B}(\mu,\nu)}:\mu,\nu\in\mathcal{S}(B),\mu\neq\nu\right\}  \\
   &\leq\lambda  L(a)<\infty,
\end{align*}
for all $a\in A$ with $L(a)<\infty$.
Consequently, by Theorem 4.1 in \cite{R2}, for any $a\in A$ with $L(a)<\infty$ we have
\begin{align*}
 L_B(\psi(a))&=L_B(\psi(a_1)+\psi(a_2))\\
 &\leq L_B(\psi(a_1))+L_B(\psi(a_2))\\
   &= L_{\rho_{L_B}}(\psi(a_1))+L_{\rho_{L_B}}(\psi(a_2))  \\
   &\leq\lambda  L(a_1)+\lambda  L(a_2)   \\
   &\leq 2\lambda  L(a)<\infty,
\end{align*}
where $a_1=\frac{a+a^*}{2},a_2=\frac{a-a^*}{2i}\in A_{sa}$. This completes the proof.
\end{proof}

\begin{corollary}\label{blo-ind}
  Let $\{((A_n, L_{n}),\phi_n)\}$ be an inductive sequence of compact quantum metric space. If there are a lower semicontinuous Lip-norm $L_A$ on $A$ and a bounded sequence $\{\gamma_n\}$ such that
\[
L_A(\phi^{n}(a))\leq \gamma_nL_{n}(a)
\]
 for all $n\in\mathbb{N}$ and $a\in A_n$, then
\[
L_A(a)\leq 2\sup\{\gamma_n:n\in\mathbb{N}\}L(a)
\]
for all $a\in A$.
\end{corollary}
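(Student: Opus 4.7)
My plan is to invoke Proposition \ref{universal} in the special case $B = A$, $L_B = L_A$, $\psi_n = \phi^n$, and $\lambda_n = \gamma_n$. The hypotheses of the proposition need to be checked one by one, and none of them requires work: $(A, L_A)$ is a compact quantum metric space whose Lip-norm is lower semicontinuous by assumption; each canonical map $\phi^n \colon A_n \to A$ is a unital $^*$-homomorphism coming from the inductive limit construction; the required compatibility triangle $\phi^{n+1}\circ \phi_n = \phi^n$ is precisely the defining property of the inductive limit; the norm-comparison $L_A(\phi^n(a)) \leq \gamma_n L_n(a)$ is part of the corollary's hypothesis; and $\{\gamma_n\}$ is assumed bounded.

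Proposition \ref{universal} then produces a unique unital $^*$-homomorphism $\psi \colon A \to A$ such that $\psi \circ \phi^n = \phi^n$ for every $n \in \mathbb{N}$ and such that
\[
L_A(\psi(a)) \leq 2 \sup\{\gamma_n : n \in \mathbb{N}\}\, L(a)
\]
for all $a \in A$. The key move is to observe that the identity map $\mathrm{id}_A \colon A \to A$ is itself a unital $^*$-homomorphism satisfying $\mathrm{id}_A \circ \phi^n = \phi^n$ for all $n$. By the uniqueness clause in Proposition \ref{universal}, this forces $\psi = \mathrm{id}_A$, and substituting into the bound above yields the desired inequality.

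There is no real obstacle: the corollary is essentially a restatement of Proposition \ref{universal} applied with target $A$. The only subtlety is the uniqueness argument, which relies on the fact that $\bigcup_{n} \phi^n(A_n)$ is norm-dense in $A$ so that a unital $^*$-homomorphism out of $A$ is determined by its values on the $\phi^n(A_n)$; this density is already built into the inductive limit construction and was used implicitly in proving Proposition \ref{universal}, so no further verification is needed here.
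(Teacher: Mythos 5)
Your proof is correct and takes essentially the same route as the paper, whose entire argument is the one-line appeal to Proposition \ref{universal}; your instantiation with $B=A$, $L_B=L_A$, $\psi_n=\phi^n$, $\lambda_n=\gamma_n$, together with the uniqueness/density argument identifying the resulting $\psi$ with $\mathrm{id}_A$, is exactly the intended reading and usefully spells out the details the paper leaves implicit.
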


\begin{proof}
 The conclusion follows from Proposition \ref{universal}.
\end{proof}

\begin{corollary}
  Let $\{((A_n, L_{n}),\phi_n)\}$ and $\{((A_n, L_{n}'),\phi_n)\}$ be two inductive sequences of compact quantum metric spaces. If there is a bounded sequence $\{\gamma_n\}$ such that
\[
L'(\phi^{n}(a))\leq \gamma_nL_{n}(a)
\]
 for all  $n\in\mathbb{N}$ and $a\in A_n$, then
\[
L'(a)\leq 2\sup\{\gamma_n:n\in\mathbb{N}\}L(a)
\]
for all $a\in A$.
\end{corollary}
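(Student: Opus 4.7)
The plan is to reduce the statement to a direct application of Corollary \ref{blo-ind}, which is already set up to compare the canonical Lip-norm $L$ built from an inductive sequence against any other lower semicontinuous Lip-norm on the inductive limit. The only conceptual point is to identify $L'$ as a legitimate candidate for the role of $L_A$ in that corollary.

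First I would apply Theorem \ref{inductive} (together with Proposition \ref{lipschitz}) to the \emph{second} inductive sequence $\{((A_n, L_n'), \phi_n)\}$. Since the two sequences share the same underlying $C^*$-algebras $A_n$ and the same connecting $^*$-homomorphisms $\phi_n$, the resulting inductive limit is the same $C^*$-algebra $A = \varinjlim(A_n, \phi_n)$. The construction in \eqref{seminorm}, applied to the second sequence via its own product metric $\rho'$ on $\mathcal{S}(A)$, produces a lower semicontinuous Lip-norm $L'$ on $A$. So $L'$ is in particular a lower semicontinuous Lip-norm on the limit of the first sequence.

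Next I would check the remaining hypothesis of Corollary \ref{blo-ind} relative to the first inductive sequence: it demands the existence of a bounded sequence $\{\gamma_n\}$ with $L'(\phi^n(a)) \leq \gamma_n L_n(a)$ for all $n \in \mathbb{N}$ and $a \in A_n$, which is exactly what the present hypothesis supplies. Plugging $L_A := L'$ into Corollary \ref{blo-ind} immediately yields the desired inequality
\[
L'(a) \leq 2 \sup\{\gamma_n : n \in \mathbb{N}\}\, L(a),
\]
for all $a \in A$.

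I do not anticipate any obstacle: the work of comparing the inverse-limit metric on $\mathcal{S}(A)$ with a competing Lip-norm, including the factor $2$ arising from the self-adjoint decomposition $a = a_1 + ia_2$, has already been absorbed into Proposition \ref{universal} and hence into Corollary \ref{blo-ind}. The present corollary is simply the specialization where the target compact quantum metric space $(B, L_B)$ is taken to be $(A, L')$ and the maps $\psi_n$ are the canonical morphisms $\phi^n : A_n \to A$.
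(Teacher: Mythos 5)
Your proposal is correct and follows essentially the same route as the paper, which also deduces the result by feeding $L'$ (recognized as a lower semicontinuous Lip-norm on $A$ via Proposition \ref{lipschitz}) into Corollary \ref{blo-ind}. Your explicit invocation of Theorem \ref{inductive} to confirm that $L'$ is a genuine Lip-norm, as the hypotheses of Corollary \ref{blo-ind} require, is a minor but welcome tightening of the paper's citation of Proposition \ref{lipschitz} alone.
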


\begin{proof}
 The conclusion follows from Proposition \ref{lipschitz} and  Corollary \ref{blo-ind}.
\end{proof}

Immediately, we have the following:

\begin{corollary}
  Let $((A_n, L_{n}),\phi_n)_{n\geq 1}$ and $((A_n, L_{n}'),\phi_n)_{n\geq 1}$ be two inductive sequences of compact quantum metric spaces. If there are bounded sequences  $\{\lambda_n\}$ and $\{\gamma_n\}$ such that
\[
L'(\phi^{n}(a))\leq \gamma_nL_{n}(a)
\]
and
\[
 L(\phi^{n}(a))\leq \lambda_nL_{n}'(a)
\]
 for all  $n\in\mathbb{N}$ and $a\in A_n$, respectively, then
\[
\frac{1}{2\sup\{\gamma_n:n\in\mathbb{N}\}}L'(a)\leq L(a)\leq 2\sup\{\lambda_n:n\in\mathbb{N}\}L'(a)
\]
for all $a\in A$.
\end{corollary}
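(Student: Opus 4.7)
The plan is to obtain the two inequalities separately, each by a single application of the preceding Corollary, and then simply concatenate them. The key observation is that, because the two inductive sequences $((A_n,L_n),\phi_n)$ and $((A_n,L_n'),\phi_n)$ share the same underlying unital $C^*$-algebras $A_n$ and the same connecting $^*$-homomorphisms $\phi_n$, they produce the same inductive limit $C^*$-algebra $A=\varinjlim(A_n,\phi_n)$. What differs is only the Lip-norms on $A$: the formula \eqref{seminorm} yields $L$ when one uses the product metric $\rho$ built from $\{\rho_{L_n}\}$, and $L'$ when one uses the analogous product metric $\rho'$ built from $\{\rho_{L_n'}\}$. Thus both $L$ and $L'$ are legitimate Lip-norms on the same algebra $A$ to which the previous Corollary applies.

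First I would apply the preceding Corollary directly. Using the hypothesis
\[
L'(\phi^{n}(a))\leq \gamma_n L_{n}(a),\qquad n\in\mathbb{N},\ a\in A_n,
\]
that Corollary produces
\[
L'(a)\leq 2\sup\{\gamma_n:n\in\mathbb{N}\}\,L(a),\qquad a\in A,
\]
which, after dividing by the (finite and positive, assuming not all $\gamma_n$ vanish) quantity $2\sup\{\gamma_n:n\in\mathbb{N}\}$, yields the left-hand inequality
\[
\frac{1}{2\sup\{\gamma_n:n\in\mathbb{N}\}}\,L'(a)\leq L(a).
\]

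Second, I would interchange the roles of the two inductive sequences and apply the same Corollary once more. Under the symmetric hypothesis
\[
L(\phi^{n}(a))\leq \lambda_n L_{n}'(a),\qquad n\in\mathbb{N},\ a\in A_n,
\]
the preceding Corollary gives directly
\[
L(a)\leq 2\sup\{\lambda_n:n\in\mathbb{N}\}\,L'(a),\qquad a\in A,
\]
which is exactly the right-hand inequality. Concatenating the two bounds produces the desired two-sided estimate for all $a\in A$.

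There is essentially no obstacle: the statement is a purely formal consequence of two invocations of the previous Corollary together with the remark that the two hypotheses play symmetric roles. The only minor points to acknowledge in the write-up are that the suprema on the right are finite because both $\{\gamma_n\}$ and $\{\lambda_n\}$ are assumed bounded, and that the division performed in the first step is permissible (if $\sup\gamma_n=0$ then $L'\equiv 0$ and the left inequality is trivial, so one may harmlessly assume $\sup\gamma_n>0$).
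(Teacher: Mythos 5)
Your proposal is correct and matches the paper's intent exactly: the paper states this corollary with no written proof ("Immediately, we have the following"), and the intended argument is precisely your two symmetric applications of the preceding corollary, one for each inequality. Your additional remarks on the finiteness of the suprema and the degenerate case $\sup\{\gamma_n:n\in\mathbb{N}\}=0$ are sensible housekeeping that the paper omits.
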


In the following two propositions, the seminorm $L_B$ on $B$ does not need to be the seminorm in the equation \eqref{seminorm}.

\begin{proposition}\label{commutative}
Let $\{((A_n, L_{A_n}),\phi_n)\}$ and $\{((B_n, L_{B_n}),\varphi_n)\}$ be two inductive sequences of compact quantum metric spaces. Suppose that for each $n\in\mathbb{N}$,  there is a unital $^*$-homomorphism $\psi_n:A_n\to B_n$ such that the diagram
  $$\CD
  A_1 @>\phi_1>> A_2 @>\phi_2>> A_3 @>\phi_3>>\cdots A\\
  @V V \psi_1V @V  V\psi_2V @V  V\psi_3V  \\
  B_1 @>\varphi_1>> B_2@>\varphi_2>> B_3 @>\varphi_3>> \cdots B
\endCD
$$
is commutative.
 If there are  a bounded sequence $\{\gamma_n\}$ and a lower semicontinuous Lip-norm $L_B$ on $B$ such that
\[
L_B((\varphi^{n}\circ\psi_n)(a))\leq \gamma_nL_{A_n}(a)
\]
 for all $n\in\mathbb{N}$ and $a\in A_n$, then there is a unique unital $^*$-homomorphism $\psi:A\to B$ such that for each $n\in\mathbb{N}$, the diagram
\[
\xymatrix{
  A_n \ar@{->}[d]_{\psi_n} \ar@{->}[r]^{\phi^n}
                & A \ar@{->}[d]^{\psi}  \\
    B_n \ar@{->}[r]_{\varphi^n}            & B
               }
\]
is commutative and
\[
L_B(\psi(a))\leq 2\sup\{\gamma_n:n\in\mathbb{N}\}L_A(a)
\]
for all $a\in A$.
\end{proposition}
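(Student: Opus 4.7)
The plan is to reduce this proposition to a direct application of Proposition \ref{universal}. The key observation is that although each $\psi_n$ takes values in the intermediate $C^*$-algebra $B_n$ rather than in the inductive limit $B$, one can collapse the picture by defining $\tilde{\psi}_n := \varphi^n \circ \psi_n : A_n \to B$; each $\tilde{\psi}_n$ is then a unital $^*$-homomorphism into the fixed target $B$, which is precisely the setup of Proposition \ref{universal}.

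Next I would verify that $\{\tilde{\psi}_n\}$ is compatible with the $A$-side inductive system, i.e., that $\tilde{\psi}_{n+1} \circ \phi_n = \tilde{\psi}_n$. Using the ladder commutativity $\psi_{n+1} \circ \phi_n = \varphi_n \circ \psi_n$ assumed in the hypothesis, together with the standard identity $\varphi^{n+1} \circ \varphi_n = \varphi^n$ for the canonical maps into the inductive limit, one computes $\varphi^{n+1} \circ \psi_{n+1} \circ \phi_n = \varphi^{n+1} \circ \varphi_n \circ \psi_n = \varphi^n \circ \psi_n$, as required. The hypothesized Lipschitz bound $L_B((\varphi^n \circ \psi_n)(a)) \leq \gamma_n L_{A_n}(a)$ is literally $L_B(\tilde{\psi}_n(a)) \leq \gamma_n L_{A_n}(a)$, and the sequence $\{\gamma_n\}$ is bounded by assumption, so every hypothesis of Proposition \ref{universal} is in place (with $\lambda_n = \gamma_n$ and with $L_B$ the given lower semicontinuous Lip-norm on $B$).

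Invoking Proposition \ref{universal} then produces a unique unital $^*$-homomorphism $\psi : A \to B$ satisfying $\psi \circ \phi^n = \tilde{\psi}_n = \varphi^n \circ \psi_n$ for every $n \in \mathbb{N}$, together with the bound $L_B(\psi(a)) \leq 2 \sup\{\gamma_n : n \in \mathbb{N}\}\, L_A(a)$ for all $a \in A$. The first identity is exactly the commutativity of the two-row diagram to be established, and the second is the desired Lipschitz estimate. Uniqueness of $\psi$ carries over automatically from Proposition \ref{universal}, since any unital $^*$-homomorphism making the prescribed diagram commute is in particular a candidate for the universal property applied to the family $\{\tilde{\psi}_n\}$. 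The only real step beyond Proposition \ref{universal} is this reorganisation from a ladder of intertwining homomorphisms into moving targets to a single coherent family into the fixed target $B$; once that bookkeeping is done, there is no substantive obstacle.
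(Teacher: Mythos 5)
Your proposal is correct and follows essentially the same route as the paper: the paper's own proof likewise sets up the composite maps $\varphi^{n}\circ\psi_n:A_n\to B$, notes that the resulting triangles commute and that the hypothesized bound $L_B((\varphi^{n}\circ\psi_n)(a))\leq\gamma_nL_{A_n}(a)$ is exactly the Lipschitz hypothesis of Proposition \ref{universal}, and then invokes that proposition. Your explicit verification of $\varphi^{n+1}\circ\psi_{n+1}\circ\phi_n=\varphi^{n}\circ\psi_n$ is just a slightly more detailed write-up of the same bookkeeping.
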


\begin{proof}
  For any $n\in\mathbb{N}$, we can get a commutative diagram as follows:
\[
\xymatrix{
  A_n \ar@{->}[dr]_{\varphi^{n}\circ\psi_n} \ar@{->}[r]^{\phi_n}
                & A_{n+1} \ar@{->}[d]^{\varphi^{n+1}\circ\psi_{n+1}}  \\
                & B
               }
\]
Moreover, for any $n\in\mathbb{N}$ and $a\in A_n$ we have
\[
L_B((\varphi^{n}\circ\psi_n)(a))\leq \gamma_nL_{A_n}(a).
\]
Since the sequence $\{\gamma_n\}$ is bounded, by Proposition \ref{universal}, there is a unique unital $^*$-homomorphism $\psi:A\to B$ such that for each $n\in\mathbb{N}$, the diagram
\[
\xymatrix{
  A_n \ar@{->}[d]_{\psi_n} \ar@{->}[r]^{\phi^n}
                & A \ar@{->}[d]^{\psi}  \\
    B_n \ar@{->}[r]_{\varphi^n}            & B
               }
\]
is commutative and
\[
L_B(\psi(a))\leq 2\sup\{\gamma_n:n\in\mathbb{N}\}L_A(a)
\]
for all $a\in A$.
\end{proof}

\begin{proposition}
Let $\{((A_n, L_{A_n}),\phi_n)\}$ and $\{((B_n, L_{B_n}),\varphi_n)\}$ be two inductive sequences of compact quantum metric spaces. Suppose that for each $n\in\mathbb{N}$, there are a positive real number $\lambda_n>0$ and a unital $^*$-homomorphism $\psi_n:A_n\to B_n$ such that the diagram
  $$\CD
  A_1 @>\phi_1>> A_2 @>\phi_2>> A_3 @>\phi_3>>\cdots A\\
  @V V \psi_1V @V  V\psi_2V @V  V\psi_3V  \\
  B_1 @>\varphi_1>> B_2@>\varphi_2>> B_3 @>\varphi_3>> \cdots B
\endCD
$$
is commutative and
\[
L_{B_n}(\psi_n(a))\leq \lambda_nL_{A_n}(a)
\]
for all $a\in A_n$.
If the sequence $\{\lambda_n\}$ is bounded and there are a bounded sequence $\{\gamma_n\}$ and a lower semicontinuous Lip-norm $L_B$ on $B$ such that
\[
L_{B}(\varphi^n(b))\leq \gamma_nL_{B_n}(b)
\]
for all $b\in B_n$, then there is a unique unital $^*$-homomorphism $\psi:A\to B$ such that for each $n\in\mathbb{N}$, the diagram
\[
\xymatrix{
  A_n \ar@{->}[d]_{\psi_n} \ar@{->}[r]^{\phi^n}
                & A \ar@{->}[d]^{\psi}  \\
    B_n \ar@{->}[r]_{\varphi^n}            & B
               }
\]
is commutative and
\[
L_B(\psi(a))\leq 2\sup\{\lambda_n\gamma_n:n\in\mathbb{N}\}L_A(a)
\]
for all $a\in A$.
\end{proposition}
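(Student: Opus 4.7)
The plan is to reduce this proposition to Proposition \ref{commutative} by composing the two Lipschitz hypotheses. The key observation is that the two bounds on $\psi_n$ and on $\varphi^n$ stack together in the obvious way, producing exactly the single-estimate hypothesis that Proposition \ref{commutative} requires.

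First, I would form the unital $^*$-homomorphism $\varphi^n \circ \psi_n : A_n \to B$ for each $n$. Using the given commutative square $\varphi_n \circ \psi_n = \psi_{n+1} \circ \phi_n$ between the two inductive sequences, together with the relation $\varphi^{n+1} \circ \varphi_n = \varphi^n$ coming from the definition of the canonical maps into $B$, I can check
\[
\varphi^{n+1} \circ \psi_{n+1} \circ \phi_n = \varphi^{n+1} \circ \varphi_n \circ \psi_n = \varphi^n \circ \psi_n,
\]
so the family $\{\varphi^n \circ \psi_n\}$ fits into a commuting triangle over $\phi_n$ with common target $B$, exactly in the shape needed for Proposition \ref{commutative}.

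Next I would chain the two Lipschitz estimates. For $a \in A_n$, applying $L_B(\varphi^n(b)) \leq \gamma_n L_{B_n}(b)$ with $b = \psi_n(a)$ and then $L_{B_n}(\psi_n(a)) \leq \lambda_n L_{A_n}(a)$ yields
\[
L_B\bigl((\varphi^n \circ \psi_n)(a)\bigr) \leq \gamma_n L_{B_n}(\psi_n(a)) \leq \lambda_n \gamma_n L_{A_n}(a).
\]
Since $\{\lambda_n\}$ and $\{\gamma_n\}$ are both bounded, so is $\{\lambda_n \gamma_n\}$, with $\sup_n \lambda_n \gamma_n < \infty$. I would then invoke Proposition \ref{commutative} with the constants $\gamma_n$ there playing the role of $\lambda_n \gamma_n$ here. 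That proposition directly produces a unique unital $^*$-homomorphism $\psi : A \to B$ satisfying $\psi \circ \phi^n = \varphi^n \circ \psi_n$ for all $n \in \mathbb{N}$, together with the bound $L_B(\psi(a)) \leq 2\sup\{\lambda_n \gamma_n : n \in \mathbb{N}\} L_A(a)$ for all $a \in A$, which is exactly the conclusion we want.

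There is no genuine analytic obstacle here: the proof is formal once the composition $\varphi^n \circ \psi_n$ is identified as the correct intermediate data. The only thing to be careful about is keeping the four maps $\phi_n, \phi^n, \varphi_n, \varphi^n$ and the compatibility squares straight, and observing that the factor of $2$ in the final inequality is not worsened by the reduction, since it is already produced by Proposition \ref{commutative} (through Proposition \ref{universal}) from the self-adjoint decomposition $a = \tfrac{a+a^*}{2} + i\tfrac{a-a^*}{2i}$, and our chained bound simply replaces the single constant sequence there by the product $\lambda_n \gamma_n$ here.
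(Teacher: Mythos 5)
Your proposal is correct and follows essentially the same route as the paper: chain the two Lipschitz estimates to get $L_B((\varphi^n\circ\psi_n)(a))\leq \lambda_n\gamma_n L_{A_n}(a)$, note that $\{\lambda_n\gamma_n\}$ is bounded, and reduce to Proposition \ref{commutative}. The only cosmetic difference is that the paper first cites the standard inductive-limit universal property for the existence and uniqueness of $\psi$ and then applies Proposition \ref{commutative} for the bound, whereas you let Proposition \ref{commutative} deliver both at once; this changes nothing of substance.
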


\begin{proof}
 From Theorem 1.10.14 in \cite{Lin}, we see that there is a unique unital $^*$-homomorphism $\psi:A\to B$ such that for each $n\in\mathbb{N}$, the diagram
\[
\xymatrix{
  A_n \ar@{->}[d]_{\psi_n} \ar@{->}[r]^{\phi^n}
                & A \ar@{->}[d]^{\psi}  \\
    B_n \ar@{->}[r]_{\varphi^n}            & B
               }
\]
is commutative. Therefore, we just need to show that \[
L_B(\psi(a))\leq 2\sup\{\lambda_n\gamma_n:n\in\mathbb{N}\}L_A(a),
\]
for all $a\in A$ with $L_A(a)<\infty$.

Note that for any $n\in\mathbb{N}$ and $a\in A_n$, we have
\begin{align*}
  L_B((\varphi^{n}\circ\psi_n)(a)) &=L_B(\varphi^{n}(\psi_n(a)))  \\
   &\leq \gamma_nL_{B_n}(\psi_n(a))\leq \lambda_n\gamma_nL_{A_n}(a)
\end{align*}
and
\[
\sup\{\lambda_n\gamma_n:n\in\mathbb{N}\}\leq \sup\{\lambda_n:n\in\mathbb{N}\}\sup\{\gamma_n:n\in\mathbb{N}\}<\infty.
\]
Thus, the sequence $\{\lambda_n\gamma_{n}\}$ is bounded, and we obtain the conclusion by Proposition \ref{commutative}, as desired.
\end{proof}

Let $(A,L_A)$ and $(B,L_B)$  be two compact quantum metric spaces, if there is a unital $^*$-isomorphism $\Psi:A\to B$ from $A$ onto $B$ such that
\[
L_B(\Psi(a))\leq \lambda L_A(a),\quad L_A(\Psi^{-1}(b))\leq \gamma L_B(b),
\]
for some constants $\lambda,\gamma>0$ and all $a\in A$ and $b\in B$, we say that $\Psi$ is a \textit{Lipschitz isomorphism} from $(A,L_A)$ onto $(B,L_B)$, and that $(A,L_A)$ and $(B,L_B)$ are \textit{Lipschitz isomorphic} \cite{KD1,Long4,Long5,Long6,Wu2011}.

\begin{theorem}\label{isomorphism}
Let $\{((A_n, L_{A_n}),\phi_n)\}$ and $\{((B_n, L_{B_n}),\varphi_n)\}$ be two inductive sequences of compact quantum metric spaces.  Suppose that for each $n\in\mathbb{N}$, there are unital $^*$-homomorphisms $\psi_n: A_n\to B_n$ and $\Psi_n:B_n\to A_{n+1}$ such that the diagram
\[\xymatrix{
  A_1\ar[d]^{\psi_1} \ar[r]^{\phi_1} & A_2 \ar[d]^{\psi_2} \ar[r]^{\phi_2} & A_3 \ar[d]^{\psi_3} \ar[r]^{\phi_3} & \cdots A \\
  B_1 \ar[r]^{\varphi_1}\ar[ru]_{\Psi_1} & B_2 \ar[r]^{\varphi_2} \ar[ru]_{\Psi_2}& B_3\ar[ru]_{\Psi_3} \ar[r]^{\varphi_3} & \cdots B   }
  \]
  is commutative.
  If there are bounded sequences $\{\lambda_n\}$ and $\{\gamma_n\}$ such that
  \[
L_B((\varphi^{n}\circ\psi_n)(a))\leq \lambda_nL_{A_n}(a)
\]
 for all $a\in A_n$ and
 \[
L_A((\phi^{n+1}\circ\Psi_n)(a))\leq \gamma_nL_{B_n}(b)
\]
 for all $b\in B_n$, respectively, then there are unique unital $^*$-homomorphisms $\psi : A\to B$ and $ \Psi : B\to A$ such that
  \begin{enumerate}
    \item $\psi \circ \Psi = \mathrm{id}_B$ and $\Psi \circ \psi = \mathrm{id}_A$;
    \item for each $n\in\mathbb{N}$ the diagram
  \[
  \xymatrix{
    A_n \ar[d]^{\psi_n} \ar[rrr]^{\phi^n} &&& A\ar@<.8mm>@{.>}[d]^{\psi} \\
    B_n \ar[rrr]_{\varphi^n}\ar[rrru]_{\phi^{n+1}\circ\Psi_n}&& & B \ar@<.8mm>@{.>}[u]^{\Psi} }
  \]
 is commutative;
    \item for any $a\in A$,
    \[
L_B(\psi(a))\leq 2\sup\{\lambda_n:n\in\mathbb{N}\}L_A(a);
\]
\item  for any $b\in B$,
\[
 L_A(\Psi(b))\leq 2\sup\{\gamma_n:n\in\mathbb{N}\}L_B(b).
\]
  \end{enumerate}
Therefore, the compact quantum metric spaces $(A,L_A)$ and $(B,L_B)$ are Lipschitz isomorphic.
\end{theorem}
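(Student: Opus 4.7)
The plan is to build $\psi$ and $\Psi$ by two applications of Proposition~\ref{commutative} (equivalently, Proposition~\ref{universal}), and then verify that the resulting maps are mutually inverse by checking the identities on the dense subalgebras $\bigcup_n \phi^n(A_n)$ and $\bigcup_n \varphi^n(B_n)$. Before starting, I would unpack the commutative diagram into the two workhorse identities $\phi_n = \Psi_n \circ \psi_n$ (commutativity of the lower triangle from $A_n$ to $A_{n+1}$) and $\varphi_n = \psi_{n+1} \circ \Psi_n$ (commutativity of the upper triangle from $B_n$ to $B_{n+1}$). Every compatibility check below reduces to these two equations together with the defining property $\phi^{n+1}\circ\phi_n = \phi^n$ and $\varphi^{n+1}\circ\varphi_n=\varphi^n$ of the inductive-limit embeddings.

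To construct $\psi:A\to B$, I would apply Proposition~\ref{commutative} to the family $\{\varphi^n\circ\psi_n:A_n\to B\}$. The compatibility with $\phi_n$ is immediate:
\[
\varphi^{n+1}\circ\psi_{n+1}\circ\phi_n
=\varphi^{n+1}\circ\psi_{n+1}\circ\Psi_n\circ\psi_n
=\varphi^{n+1}\circ\varphi_n\circ\psi_n
=\varphi^n\circ\psi_n.
\]
The Lipschitz hypothesis $L_B((\varphi^n\circ\psi_n)(a))\le \lambda_n L_{A_n}(a)$ with $\{\lambda_n\}$ bounded is precisely what Proposition~\ref{commutative} requires, and it yields a unique unital $^*$-homomorphism $\psi:A\to B$ satisfying $\psi\circ\phi^n=\varphi^n\circ\psi_n$ and $L_B(\psi(a))\le 2\sup_n\lambda_n\,L_A(a)$. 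Symmetrically, apply the same proposition to the family $\{\phi^{n+1}\circ\Psi_n:B_n\to A\}$; the compatibility
\[
\phi^{n+2}\circ\Psi_{n+1}\circ\varphi_n
=\phi^{n+2}\circ\Psi_{n+1}\circ\psi_{n+1}\circ\Psi_n
=\phi^{n+2}\circ\phi_{n+1}\circ\Psi_n
=\phi^{n+1}\circ\Psi_n
\]
together with the bound $L_A((\phi^{n+1}\circ\Psi_n)(b))\le \gamma_n L_{B_n}(b)$ produces the unique $\Psi:B\to A$ with $\Psi\circ\varphi^n=\phi^{n+1}\circ\Psi_n$ and the stated Lipschitz bound. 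This establishes items (2), (3) and (4).

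For the mutual-inverse property (1), I would evaluate the two compositions on the generating images. On $\varphi^n(B_n)$:
\[
(\psi\circ\Psi)\circ\varphi^n
=\psi\circ\phi^{n+1}\circ\Psi_n
=\varphi^{n+1}\circ\psi_{n+1}\circ\Psi_n
=\varphi^{n+1}\circ\varphi_n
=\varphi^n,
\]
and on $\phi^n(A_n)$:
\[
(\Psi\circ\psi)\circ\phi^n
=\Psi\circ\varphi^n\circ\psi_n
=\phi^{n+1}\circ\Psi_n\circ\psi_n
=\phi^{n+1}\circ\phi_n
=\phi^n.
\]
Since $\bigcup_n\varphi^n(B_n)$ is norm-dense in $B$ and $\bigcup_n\phi^n(A_n)$ is norm-dense in $A$, and $\psi\circ\Psi$ and $\Psi\circ\psi$ are norm-continuous (as compositions of $^*$-homomorphisms, hence contractive), the identities extend by density to all of $B$ and $A$. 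In particular $\psi$ is a $^*$-isomorphism with inverse $\Psi$, and the two Lipschitz estimates then say exactly that $(A,L_A)$ and $(B,L_B)$ are Lipschitz isomorphic in the sense of the definition preceding the theorem.

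The only real obstacle is bookkeeping: one must be careful that the compatibility diagrams going into Proposition~\ref{commutative} are the right ones, since the hypothesis involves the staggered diagonal maps $\Psi_n:B_n\to A_{n+1}$ rather than a straight vertical $B_n\to A_n$, and the index shifts must be tracked in the two triangle identities. Once those identities are written down correctly, all the analytic content has already been absorbed into Propositions~\ref{universal} and \ref{commutative}, and no further estimate is needed.
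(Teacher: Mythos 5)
Your proof is correct and follows essentially the same route as the paper: both obtain the two Lipschitz estimates by applying Proposition~\ref{commutative} to the families $\{\varphi^n\circ\psi_n\}$ and $\{\phi^{n+1}\circ\Psi_n\}$. The only difference is that the paper gets the existence of $\psi$, $\Psi$ and the identities $\psi\circ\Psi=\mathrm{id}_B$, $\Psi\circ\psi=\mathrm{id}_A$ by citing the standard intertwining theorem (Theorem 1.10.16 in \cite{Lin}), whereas you re-derive them from the universal property plus a density argument --- a self-contained but equivalent verification.
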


\begin{proof}
From Theorem 1.10.16 in \cite{Lin}, we see that there are unital $^*$-homomorphisms $\psi : A\to B$ and $ \Psi : B\to A$ such that $\psi \circ \Psi = \mathrm{id}_B$ and $\Psi \circ \psi = \mathrm{id}_A$, and such that for each $n\in\mathbb{N}$, the diagram
   \[
  \xymatrix{
    A_n \ar[d]^{\psi_n} \ar[rrr]^{\phi^n} &&& A\ar@<.8mm>@{.>}[d]^{\psi} \\
    B_n \ar[rrr]_{\varphi^n}\ar[rrru]_{\phi^{n+1}\circ\Psi_n}&& & B \ar@<.8mm>@{.>}[u]^{\Psi} }
  \]
  is commutative. Therefore, we just need to show that \[
L_B(\psi(a))\leq 2\sup\{\lambda_n:n\in\mathbb{N}\}L_A(a),
\]
for all $a\in A$ and
\[
\quad L_A(\Psi(b))\leq 2\sup\{\gamma_n:n\in\mathbb{N}\}L_B(b),
\]
for all $b\in B$. However, these two statements follow from Proposition \ref{commutative}.
\end{proof}

\begin{corollary}\label{isomorphism1}
Let $\{((A_n, L_{A_n}),\phi_n)\}$ and $\{((B_n, L_{B_n}),\varphi_n)\}$ be two inductive sequences of compact quantum metric spaces.  Suppose that for each $n\in\mathbb{N}$, there are unital $^*$-homomorphisms $\psi_n: A_n\to B_n$ and $\Psi_n:B_n\to A_{n+1}$ such that the diagram
\[\xymatrix{
  A_1\ar[d]^{\psi_1} \ar[r]^{\phi_1} & A_2 \ar[d]^{\psi_2} \ar[r]^{\phi_2} & A_3 \ar[d]^{\psi_3} \ar[r]^{\phi_3} & \cdots A \\
  B_1 \ar[r]^{\varphi_1}\ar[ru]_{\Psi_1} & B_2 \ar[r]^{\varphi_2} \ar[ru]_{\Psi_2}& B_3\ar[ru]_{\Psi_3} \ar[r]^{\varphi_3} & \cdots B   }
  \]
  is commutative.
  If there are bounded sequences $\{\lambda_n\}$, $\{\gamma_n\}$, $\{\alpha_n\}$ and $\{\beta_n\}$ such that
  \begin{gather*}
    L_{B_n}(\psi_n(a))\leq \lambda_nL_{A_n}(a), \quad  L_{B}(\varphi^n(a))\leq \gamma_nL_{B_n}(b),  \\
   L_{A_{n+1}}(\Psi_n(a))\leq \alpha_nL_{B_n}(b),  \quad   L_{A}(\phi^n(a))\leq \beta_nL_{A_n}(a),
  \end{gather*}
 for all $a\in A_n$, $b\in B_n$, respectively, then there are unital $^*$-homomorphisms $\psi : A\to B$ and $ \Psi : B\to A$ such that
  \begin{enumerate}
    \item $\psi \circ \Psi = \mathrm{id}_B$ and $\Psi \circ \psi = \mathrm{id}_A$;
    \item for each $n\in\mathbb{N}$ the diagram
  \[
  \xymatrix{
    A_n \ar[d]^{\psi_n} \ar[rrr]^{\phi^n} &&& A\ar@<.8mm>@{.>}[d]^{\psi} \\
    B_n \ar[rrr]_{\varphi^n}\ar[rrru]_{\phi^{n+1}\circ\Psi_n}&& & B \ar@<.8mm>@{.>}[u]^{\Psi} }
  \]
 is commutative;
    \item for any $a\in A$,
    \[
L_B(\psi(a))\leq 2\sup\{\lambda_n\gamma_n:n\in\mathbb{N}\}L_A(a);
\]
\item  for any $b\in B$,
\[
 L_A(\Psi(b))\leq 2\sup\{\alpha_n\beta_{n+1}:n\in\mathbb{N}\}L_B(b).
\]
  \end{enumerate}
\end{corollary}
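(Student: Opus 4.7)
The plan is to reduce Corollary~\ref{isomorphism1} to Theorem~\ref{isomorphism} by composing the four given Lipschitz bounds into the two composite bounds required by that theorem. The hypotheses of Theorem~\ref{isomorphism} involve $L_B$ applied to $\varphi^n \circ \psi_n$ and $L_A$ applied to $\phi^{n+1} \circ \Psi_n$, whereas here we are only handed per-factor control. Chasing each composite through the intermediate space will supply exactly what is needed.

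First, I would estimate the two composites in question. For $a \in A_n$, applying the bound on $\varphi^n$ to the element $\psi_n(a) \in B_n$ gives
\[
L_B((\varphi^n \circ \psi_n)(a)) \leq \gamma_n L_{B_n}(\psi_n(a)) \leq \lambda_n \gamma_n L_{A_n}(a).
\]
Symmetrically, for $b \in B_n$, applying the bound on $\phi^{n+1}$ to $\Psi_n(b) \in A_{n+1}$ gives
\[
L_A((\phi^{n+1} \circ \Psi_n)(b)) \leq \beta_{n+1} L_{A_{n+1}}(\Psi_n(b)) \leq \alpha_n \beta_{n+1} L_{B_n}(b).
\]

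Next, I would verify that the composite sequences $\{\lambda_n \gamma_n\}_{n \geq 1}$ and $\{\alpha_n \beta_{n+1}\}_{n \geq 1}$ are bounded. Since each of $\{\lambda_n\}$, $\{\gamma_n\}$, $\{\alpha_n\}$, $\{\beta_n\}$ is bounded by hypothesis, the termwise products satisfy
\[
\sup_n \lambda_n \gamma_n \leq \bigl(\sup_n \lambda_n\bigr)\bigl(\sup_n \gamma_n\bigr) < \infty, \qquad \sup_n \alpha_n \beta_{n+1} \leq \bigl(\sup_n \alpha_n\bigr)\bigl(\sup_n \beta_n\bigr) < \infty.
\]

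Finally, I would invoke Theorem~\ref{isomorphism} with the bounded sequences $\{\lambda_n \gamma_n\}$ and $\{\alpha_n \beta_{n+1}\}$ in the roles of $\{\lambda_n\}$ and $\{\gamma_n\}$ there. This yields the unique unital $^*$-homomorphisms $\psi : A \to B$ and $\Psi : B \to A$ with $\psi \circ \Psi = \mathrm{id}_B$ and $\Psi \circ \psi = \mathrm{id}_A$, the commuting diagram in condition (2), and the estimates $L_B(\psi(a)) \leq 2 \sup\{\lambda_n \gamma_n\} L_A(a)$ and $L_A(\Psi(b)) \leq 2 \sup\{\alpha_n \beta_{n+1}\} L_B(b)$. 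There is no real obstacle here: the argument is purely a composition of the given estimates. The only mild bookkeeping point is the index shift in the second composite ($\beta_{n+1}$ rather than $\beta_n$), which arises because $\Psi_n$ takes values in $A_{n+1}$, consistent with the zigzag diagram of Theorem~\ref{isomorphism}; the supremum is nonetheless finite by the boundedness of $\{\beta_n\}$.
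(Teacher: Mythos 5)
Your proposal is correct and follows essentially the same route as the paper: compose the per-factor Lipschitz bounds through the intermediate spaces to get $L_B((\varphi^n\circ\psi_n)(a))\leq\lambda_n\gamma_n L_{A_n}(a)$ and $L_A((\phi^{n+1}\circ\Psi_n)(b))\leq\alpha_n\beta_{n+1}L_{B_n}(b)$, check boundedness of the product sequences, and apply Theorem~\ref{isomorphism}. The index shift $\beta_{n+1}$ is handled exactly as in the paper's proof.
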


\begin{proof}
Note that for any $n\in\mathbb{N}$ and $a\in A_n,b\in B_n$, we have
\begin{align*}
  L_B((\varphi^{n}\circ\psi_n)(a)) &=L_B(\varphi^{n}(\psi_n(a)))  \\
   &\leq \gamma_nL_{B_n}(\psi_n(a))\leq \lambda_n\gamma_nL_{A_n}(a)
\end{align*}
and
\begin{align*}
  L_A((\phi^{n+1}\circ\Psi_n)(b)) &=L_A(\phi^{n+1}(\Psi_n(b)))  \\
   &\leq \beta_{n+1}L_{A_{n+1}}(\Psi_n(b))\leq \alpha_n\beta_{n+1}L_{B_n}(b),
\end{align*}
respectively. Moreover, it is easy to find that
\[
\sup\{\lambda_n\gamma_n:n\in\mathbb{N}\}\leq \sup\{\lambda_n:n\in\mathbb{N}\}\sup\{\gamma_n:n\in\mathbb{N}\}<\infty
\]
and
\begin{align*}
 \sup\{\alpha_n\beta_{n+1}:n\in\mathbb{N}\}  &\leq \sup\{\alpha_n:n\in\mathbb{N}\}\sup\{\beta_{n+1}:n\in\mathbb{N}\}  \\
   &\leq \sup\{\alpha_n:n\in\mathbb{N}\}\sup\{\beta_n:n\in\mathbb{N}\}<\infty.
\end{align*}
Therefore, the sequences $\{\lambda_n\gamma_n\}$ and $\{\alpha_n\beta_{n+1}\}$ are bounded, and the conclusion follows from Theorem \ref{isomorphism}.
\end{proof}

\begin{corollary}
 Let $\{((A_n, L_{n}),\phi_n)\}$ and $\{((A_n, L_{n}'),\phi_n)\}$ be two inductive sequences of compact quantum metric spaces. Suppose that for each $n\in\mathbb{N}$, there is a Lipschitz isomorphism $\psi_n: A_n\to A_n$ such that the diagram
  $$\CD
  A_1 @>\phi_1>> A_2 @>\phi_2>> A_3 @>\phi_3>>\cdots A\\
  @V V \psi_1V @V  V\psi_2V @V  V\psi_3V  \\
  A_1 @>\phi_1>> A_2@>\phi_2>> A_3 @>\phi_3>> \cdots A
\endCD
$$
is commutative, and such that
 \[
 L_{n}'(\psi_n(a))\leq \lambda_nL_{n}(a),\quad L_{n}(\psi_n^{-1}(a))\leq \alpha_nL_{n}'(a)
 \]
 for some $\alpha_n,\lambda_n>0$ and all $a\in A_n$. If the sequences $\{\lambda_n\}$ and $\{\alpha_n\}$ are bounded, and there are bounded sequences $\{\gamma_n\}$, $\{\beta_n\}$ and
  $\{\theta_n\}$  such that
  \begin{enumerate}
    \item $L'(\phi^n(a))\leq \gamma_nL_{n}'(a)$;
    \item $L(\phi^n(a))\leq \beta_nL_{n}(a)$;
    \item $L'_{n+1}(\phi_n(a))\leq \theta_nL_{n}'(a)$,
  \end{enumerate}
 for all $n\in\mathbb{N}$ and $a\in A_n$, then there is a Lipschitz isomorphism $\psi:A\to A$ such that
 \[
 L'(\psi(a))\leq 2\sup\{\lambda_n\gamma_n:n\in\mathbb{N}\}L(a)
 \]
 and
 \[
  L(\psi^{-1}(a))\leq 2\sup\{\theta_n\alpha_{n+1}\beta_{n+1}:n\in\mathbb{N}\} L'(a)
 \]
 for all $a\in A$.
\end{corollary}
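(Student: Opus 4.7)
The plan is to recognize the hypotheses of this corollary as a special case of Corollary \ref{isomorphism1}, with the two inductive sequences identified on the nose. Concretely I set $B_n := A_n$, $\varphi_n := \phi_n$ and $L_{B_n} := L_n'$, so that the second inductive limit $B$ coincides with $A$ and its Lip-norm $L_B$ is $L'$. What is missing from the list of data for Corollary \ref{isomorphism1} is a family of upward-diagonal unital $^*$-homomorphisms $\Psi_n : B_n \to A_{n+1}$, and the natural choice forced by the Lipschitz invertibility of the $\psi_n$'s is
\[
\Psi_n := \phi_n \circ \psi_n^{-1}, \qquad n \in \mathbb{N}.
\]

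First I would check the two triangle relations in the bowtie diagram of Corollary \ref{isomorphism1}. The equality $\Psi_n \circ \psi_n = \phi_n$ is immediate from the definition. For $\psi_{n+1} \circ \Psi_n = \varphi_n = \phi_n$ I would invoke the commutativity of the given square to write
\[
\psi_{n+1} \circ \Psi_n = \psi_{n+1} \circ \phi_n \circ \psi_n^{-1} = \phi_n \circ \psi_n \circ \psi_n^{-1} = \phi_n.
\]

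Next I would verify the four Lipschitz estimates needed to invoke Corollary \ref{isomorphism1}. Three of them translate directly from the hypotheses of the present corollary: $L_n'(\psi_n(a)) \leq \lambda_n L_n(a)$, $L'(\phi^n(b)) \leq \gamma_n L_n'(b)$, and $L(\phi^n(a)) \leq \beta_n L_n(a)$. The remaining estimate, of the form $L_{n+1}(\Psi_n(b)) \leq c_n L_n'(b)$, is the only point where an actual calculation is required and is the main obstacle. For $b \in B_n = A_n$ I would use $\psi_{n+1}^{-1}$ to pull $\Psi_n(b)$ back to the primed side and then use the commutative square in the \emph{reverse} direction:
\[
L_{n+1}(\Psi_n(b)) = L_{n+1}\bigl(\psi_{n+1}^{-1}(\psi_{n+1}(\Psi_n(b)))\bigr) \leq \alpha_{n+1}\, L'_{n+1}(\psi_{n+1}(\Psi_n(b))).
\]
A direct calculation gives $\psi_{n+1}(\Psi_n(b)) = \psi_{n+1}\phi_n\psi_n^{-1}(b) = \phi_n\psi_n\psi_n^{-1}(b) = \phi_n(b)$, and hypothesis (3) then yields $L'_{n+1}(\phi_n(b)) \leq \theta_n L'_n(b)$, producing the clean bound $L_{n+1}(\Psi_n(b)) \leq \theta_n \alpha_{n+1} L'_n(b)$.

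Finally, since $\{\alpha_n\}$ and $\{\theta_n\}$ are each bounded, so is the product sequence $\{\theta_n \alpha_{n+1}\}$; combined with the boundedness of $\{\lambda_n\}$, $\{\gamma_n\}$ and $\{\beta_n\}$, all four scalar sequences required by Corollary \ref{isomorphism1} satisfy its hypotheses. Applying that corollary supplies unital $^*$-homomorphisms $\psi, \Psi : A \to A$ with $\psi \circ \Psi = \Psi \circ \psi = \mathrm{id}_A$ together with the bounds $L'(\psi(a)) \leq 2\sup_n\{\lambda_n \gamma_n\}\, L(a)$ and $L(\Psi(b)) \leq 2\sup_n\{\theta_n \alpha_{n+1} \beta_{n+1}\}\, L'(b)$. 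Writing $\psi^{-1}$ for $\Psi$, this is exactly the conclusion. The crux of the argument is the single estimate on $L_{n+1}(\Psi_n(b))$, which is the one place where the inverses $\psi_n^{-1}$ must interact with the square commutativity of the given diagram.
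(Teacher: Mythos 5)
Your proposal is correct and follows essentially the same route as the paper: reduce to Corollary \ref{isomorphism1} by introducing the diagonal maps $\Psi_n$ and proving the single nontrivial estimate $L_{n+1}(\Psi_n(b))\leq \theta_n\alpha_{n+1}L_n'(b)$. The only cosmetic difference is that you define $\Psi_n=\phi_n\circ\psi_n^{-1}$ while the paper takes $\Psi_n=\psi_{n+1}^{-1}\circ\phi_n$; these coincide by the commutativity of the square $\psi_{n+1}\circ\phi_n=\phi_n\circ\psi_n$, and your detour through $\psi_{n+1}(\Psi_n(b))=\phi_n(b)$ yields exactly the paper's bound.
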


\begin{proof}
 For any $n\in \mathbb{N}$, set
 \[
 \Psi_n=\psi_{n+1}^{-1}\circ \phi_n:A_n\to A_{n+1}.
 \]
 Then we get the following commutative diagram:
 \[\xymatrix{
  A_1\ar[d]^{\psi_1} \ar[r]^{\phi_1} & A_2 \ar[d]^{\psi_2} \ar[r]^{\phi_2} & A_3 \ar[d]^{\psi_3} \ar[r]^{\phi_3} & \cdots A \\
  A_1 \ar[r]^{\phi_1}\ar[ru]_{\Psi_1} & A_2 \ar[r]^{\phi_2} \ar[ru]_{\Psi_2}& A_3\ar[ru]_{\Psi_3} \ar[r]^{\phi_3} & \cdots A.}
  \]
  Moreover, for any $n\in \mathbb{N}$ and $a\in A_n$, we have
  \begin{align*}
    L_{n+1}(\Psi_n(a)) &=L_{n+1}((\psi_{n+1}^{-1}\circ \phi_n)(a))=L_{n+1}(\psi_{n+1}^{-1}(\phi_n(a)))  \\
     &\leq \alpha_{n+1}L_{n+1}'(\phi_n(a)) \\
     &\leq \theta_n\alpha_{n+1}L_{n}'(a).
  \end{align*}
  Note that
  \begin{align*}
 \sup\{\theta_n\alpha_{n+1}:n\in\mathbb{N}\}  &\leq \sup\{\theta_n:n\in\mathbb{N}\}\sup\{\alpha_{n+1}:n\in\mathbb{N}\}  \\
   &\leq \sup\{\theta_n:n\in\mathbb{N}\}\sup\{\alpha_n:n\in\mathbb{N}\}<\infty.
\end{align*}
 Therefore, the sequence $\{\theta_n\alpha_{n+1}\}$ is bounded, and the statement follows from Corollary \ref{isomorphism1}.
\end{proof}

%
%
%
%
%


%
%


\end{document}